\let\saved@setaddresses\@setaddresses %
\let\@setaddresses\relax              %
\DeclareMathAlphabet{\lcal}{U}{dutchcal}{m}{n}
\definecolor{darkred}{rgb}{0.5,0,0} %
\definecolor{darkblue}{rgb}{0,0,0.5} %
\newcommand{\ccA}{\mathscr{A}}
\newcommand{\ccC}{\mathscr{C}}
\newcommand{\cA}{\mathcal{A}}
\newcommand{\cB}{\mathcal{B}}
\newcommand{\cC}{\mathcal{C}}
\newcommand{\cE}{\mathcal{E}}
\newcommand{\cF}{\mathcal{F}}
\newcommand{\cH}{\mathcal{H}}
\newcommand{\cK}{\mathcal{K}}
\newcommand{\cM}{\mathcal{M}}
\newcommand{\cN}{\mathcal{N}}
\newcommand{\cT}{\mathcal{T}}
\newcommand{\la}{\langle}
\newcommand{\ra}{\rangle}
\newcommand{\II}[1]{\left\langle x \right\rangle_{#1}}
\newcommand{\vg}{{\tt{g}}}
\newcommand{\freeg}{\mathbb F_{\vg}}
\newcommand{\C}{\mathbb C}
\newcommand{\N}{\mathbb N}
\newcommand{\real}{\operatorname{real}}
\newcommand{\imag}{\operatorname{imag}}
\newcommand{\df}[1]{{\it{#1}}{\index{#1}}}
\newcommand{\ad}{\lcal{d}}
\newcommand{\Ftgd}{\mathcal{F}^2_{\vg,\ad}}  
\newcommand{\AB}{B} 
\newcommand{\ME}{\mathbb{E}}  
\newcommand{\SG}{G}  %
\newtheorem{thm}{Theorem}[section]
\newtheorem{corollary}[thm]{Corollary}
\newtheorem{lemma}[thm]{Lemma}
\newtheorem{prop}[thm]{Proposition}
\newtheorem{remark}[thm]{Remark}
\numberwithin{equation}{section}
\title[Positive operator-valued noncommutative polynomials are squares]{Positive operator-valued noncommutative\\[.5mm] polynomials are %
squares}
\author[A.\ Jindal]{Abhay Jindal${}^{1,Q}$}
\address{Abhay Jindal, Faculty of Mathematics and Physics, University of Ljubljana, Slovenia}
\email{abhay.jindal@fmf.uni-lj.si}
\thanks{${}^1$Supported by the Slovenian Research Agency 
	program P1-0222 and grant J1-50002. AJ thanks
Ecole Polytechnique and Inria for
hospitality during the preparation of this manuscript. }
\author[I.\ Klep]{Igor Klep${}^{2,Q}$}
\address{Igor Klep, Faculty of Mathematics and Physics, University of Ljubljana 
\& Famnit, University of Primorska, Koper 
\& Institute of Mathematics, Physics and Mechanics,
Ljubljana, Slovenia}
\email{igor.klep@fmf.uni-lj.si}
\thanks{${}^2$Supported by the Slovenian Research Agency 
	program P1-0222 and grants J1-50002,
	N1-0217,
J1-60011, J1-50001, J1-3004 and J1-60025. Partially supported by the Fondation de l’Ecole polytechnique as part
of the Gaspard Monge Visiting Professor Program. IK thanks
Ecole Polytechnique and Inria for
hospitality during the preparation of this manuscript.}
\author[S. McCullough]{Scott McCullough}
\address{Scott McCullough, Department of Mathematics\\
	University of Florida\\ Gainesville} %
\email{sam@math.ufl.edu}
\thanks{}
\thanks{${}^Q$This work was performed within the project COMPUTE, funded within the QuantERA II 
Programme that has received funding from the EU's H2020 research and innovation programme under the GA No 101017733 {\normalsize\euflag}}
\subjclass[2020]{Primary: 47A68, 46L07, 43A35; Secondary: 13J30, 47A56, 47B35}
\keywords{factorization,  noncommutative polynomial, sum of squares, trigonometric polynomial, Positivstellensatz}
\begin{document}

\begin{abstract}
We establish operator-valued versions of the earlier foundational factorization results for noncommutative polynomials due to Helton (Ann.~Math., 2002) and one of the authors (Linear Alg.~Appl., 2001). Specifically, we show that every positive operator-valued noncommutative polynomial $p$ admits a single-square factorization $p=r^{*}r$. An analogous statement holds for operator-valued noncommutative trigonometric polynomials (i.e., operator-valued elements of a free group algebra).

Our approach follows the now standard sum-of-squares (sos) paradigm but requires new results and constructions tailored to operator coefficients. Assuming a positive $p$ is not sos, Hahn--Banach separation yields a linear functional that is positive on the sos cone and negative on $p$; a Gelfand--Naimark--Segal (GNS) construction then produces a representing tuple $Y$ leading to contradiction since $p$ was assumed positive on $Y$.

The main technical input is a canonical tuple $A$ of self-adjoint operators and, in the unitary case, a canonical tuple $U$ of unitaries, both constructed from the left-regular representation on Fock space. We prove that, up to a universal constant, the norms $\|p(A)\|$ and $\|p(U)\|$ bound the operator norm of any positive semidefinite Gram matrix $G$ representing the sos polynomial $p$. This uniform control is the key input in showing that the cone of (sums of) squares is closed in the product ultraweak topology on the coefficients. A separate approximation argument then produces a separating functional that is continuous for the weak operator topology (WOT). This two-step passage between the ultraweak and WOT topologies constitutes our separation argument and yields the required WOT closedness of the sos cone. With this in hand, the GNS construction associates to such a separating linear functional a finite-rank positive semidefinite noncommutative Hankel matrix and, on its range, produces the desired tuple $Y$.\looseness=-1
\end{abstract}

\maketitle

\begin{list}{}{%
  \setlength{\leftmargin}{1.5cm}%
  \setlength{\rightmargin}{1.5cm}%
  \setlength{\listparindent}{0pt}
  \setlength{\itemsep}{0pt}
  \setlength{\parsep}{0pt}
}
  \item
  \tableofcontents
\end{list}

\section{Introduction}

Positivity and factorization lie at the heart of real algebraic geometry and operator theory. In the commutative setting, positivity certificates via sums of squares (sos) trace back to 
Hilbert's $17^{\rm th}$ problem in 1900; for classical results and modern treatments see \cite{BCR98,Mar08,Sc24}.\looseness=-1

In the 21st century, motivated by developments in linear systems theory \cite{SIG98,dOHMP09}, quantum physics \cite{brunner}, and free probability \cite{MS17}, the free (noncommutative) counterpart has evolved into a broad program within noncommutative function theory \cite{KVV14,MS11,AM15,BMV16,PTD22}. This framework encompasses noncommutative factorizations and noncommutative Positivstellens\"atze. Early landmarks include Helton’s theorem that (scalar) positive noncommutative polynomials are sums of squares \cite{Hel02} and McCullough’s factorization theory for noncommutative polynomials \cite{McC01}; see also \cite{HM04,HMP04,Po95,JM12,JMS21} and the references therein for further developments.

This paper establishes operator-valued analogs of these factorization theorems: every positive operator-valued noncommutative polynomial $p$ admits a single-square factorization $p=r^{*}r$, with an analogous result for operator-valued noncommutative trigonometric polynomials (elements of the free group algebra).

Beyond the noncommutative positivity literature, our results resonate with classical and modern operator factorization themes, including canonical/state-space factorizations of Bart--Gohberg--Kaashoek and collaborators \cite{BGK79,BGKR10}, and the operator Fejér--Riesz and multivariable outer factorization lines \cite{DR10,DW05,GW05}. While our focus is the free (noncommutative) polynomial and free group contexts, the methods developed, such as the WOT-closure mechanism via Fock-space evaluations and the finite-rank Hankel realization, are of independent interest and may be useful in adjacent problems within free analysis and operator theory.

\subsection*{Guide to the introduction.}
Notation is introduced in Subsection~\ref{ssec:notation}. The main results are stated and their proofs outlined in Subsection~\ref{ssec:mainresults}, while Subsection~\ref{ssec:readersguide} provides a roadmap for the remainder of the paper.

\subsection{Notation}\label{ssec:notation}
Fix a positive integer $\vg$. Let 
\df{$\la x \ra$}  denote the free monoid on the $\vg$ letters of the alphabet $x = \{ x_{1}, \dots, x_{\vg}\}.$ 
Its multiplicative identity is the empty word 
\df{$\varnothing$}. 
We endow $\la x\ra$ with the \df{graded lexicographic order}.
The length of a word $w \in \la x \ra$ is denoted by $|w|.$ The
set of all elements (words) of $\la x \ra$ of length (or degree) at most $d$
is denoted \df{$\la x \ra_{d}$}.
The cardinality of $\la x\ra_d$  is \index{$N(d)$}
\[
	N(d) \ =\  \sum_{i=0}^d \vg^i \ =\  \frac{\vg^{d+1}-1}{\vg-1}.
\]

Let \df{$\cH$} be a fixed complex Hilbert space.
Let  \df{$\cB(\cH)$} be the space of all bounded linear operators on $\cH$, and 
let 
\df{$\cA $} to be the free semigroup $\cB(\cH)$-algebra on $x,$
i.e., $\cA=\cB(\cH)\la x\ra = \cB(\cH) \otimes \mathbb{C} \la x \ra$. An element $p$ of $\cA$ takes the form, 
\begin{equation}\label{eq:poly}
 p \ =\  \sum_{w\in\la x\ra}^{\rm finite} P_{w} w,
\end{equation}
where
$P_{w} \in \cB(\cH),$ and is referred to as an (operator-valued) 
\df{polynomial} in $x.$ 
Let \df{$\cA_{d}$} denote the elements from $\cA$ of degree at most $d$.

Equip $\cA$ with the involution \df{$^*$}: on letters, $x_{j}^{*} = x_{j},$ on a word $w = x_{i_1} \cdots x_{i_n} \in \la x \ra,$ 
\[
  w^{*} \ =\  x_{i_n} \cdots x_{i_1};
\]
and, on a polynomial $p$ as in \eqref{eq:poly}, \index{$p^*$}
\[
   p^{*} \ =\  \sum P_{w}^{*} w^{*},
\]
where $P_{w}^{*}$ is the adjoint of the operator $P_{w}$ in $\cB(\cH).$
	
Let $\cK$ be a Hilbert space and $X = (X_{1}, \dots, X_{\vg})$ be a tuple of operators from $\cB(\cK).$  The 
\df{evaluation} of $p$ at $X$ is defined as 
\[
 p(X) \ =\  \sum P_{w} \otimes X^{w},
\]
where $X^{w} = X_{i_1} \dots X_{i_n}$ for $w = x_{i_1} \dots x_{i_n}.$ In general, $p(X)^{*}$ (the adjoint of $p(X)$) and $p^{*}(X)$ are not the same. They are the same if $X$ is a tuple of self-adjoint operators. 

\subsubsection{Trigonometric polynomials}

We will also be interested in evaluating noncommutative polynomials in tuples of unitaries on Hilbert space. An appropriate setting to consider these is the group algebra of the free group $\freeg$ on the $\vg$  letters  $x_i$, $i=1,\ldots,\vg$. 
Elements of $\freeg$ are (reduced) words in the alphabet $x_i,x_i^{-1}$.

Let $\ccA$ be the algebra $\cB(\cH)[\freeg] = \cB(\cH)\otimes\C[\freeg]$. Its elements are called \df{trigonometric polynomials}, and $\ccA$
is
 endowed with the involution
\begin{equation}
\label{d:evalPunitary}
	\sum_{u\in\freeg}^{\rm finite} P_u u \ \mapsto\  \sum_u P_u^* u^{-1}.
\end{equation}
If $X$ is a tuple of unitary operators, then $(X^w)^* = X^{w^{-1}}$ and so 
\[
  p(X)^* \ =\   \left ( \sum P_w \otimes X^w\right)^* \ =\ \sum P_w^* \otimes X^{w^*}   \ =\  p^*(X)
\]
for all $p\in\ccA$. The notions of length of a word, degree of a polynomial, etc.~extend naturally to $\ccA$ and we let,
for positive integers $d,$ \index{$\ccA$} \index{$\ccA_d$}
\[
\ccA_d \ =\ \left\{ 
\sum_{\substack{u\in\freeg\\ |u|\le d}} P_u u \, :\, P_u \in \cB(\cH)
  \right\}.
\]
The number of words in $\freeg$ of length $\le d$, $(\freeg)_d$, is denoted by \df{$N_{\rm red}(d)$} and equals
\[
	N_{\rm red}(d)
	\ =\ 
	1+\sum_{k=1}^{d} 2\vg\,(2\vg-1)^{k-1}
\ =\  \frac{\vg (2\vg-1)^{d}-1}{\vg-1}.
\]
	
\subsection{Main results}\label{ssec:mainresults}
We are now ready to state our main results. The first is an operator-valued version of the classical %
sum of squares theorem of Helton \cite{Hel02} and McCullough \cite{McC01}, Theorem~\ref{thm:sos}. The second, Theorem~\ref{thm:usos},
is a factorization result for positive operator-valued trigonometric polynomials extending a long list of 
results pertaining to scalar-valued noncommutative trigonometric polynomials \cite{McC01,HMP04,BT07,NT13,KVV17,Oz13}. For a bounded operator $T$, the notation $T \succeq 0$ means that the operator $T$ is positive semidefinite (psd).

\begin{thm}\label{thm:sos}
For $f\in\cA_{2d}$ the following are equivalent:
\begin{enumerate}[\rm(i)]\itemsep=5pt
\item \label{i:sos:i}
For any Hilbert space $\cK$ and any tuple of self-adjoint operators $Y = (Y_{1}, \dots, Y_{\vg})\in\cB(\cK)^\vg$, $f(Y)\succeq0$;
\item \label{i:sos:ii}
For any $n\in\N$ and any tuple of self-adjoint matrices 
$Y = (Y_{1}, \dots, Y_{\vg})\in M_n(\C)^\vg$, $f(Y)\succeq0$;
\item \label{i:sos:iii}
There exist $r_1,\ldots,r_{N(d)}\in\cA_d$ s.t.
\begin{equation}\label{eq:sosthm}
	f \ =\ \sum_{i=1}^{N(d)} r_i^*r_i.
\end{equation}
\end{enumerate}

\noindent If $\cH$ is infinite-dimensional, then the above statements are also equivalent to
\begin{enumerate}[label=\textup{(\roman*)}, resume]\itemsep=5pt
\item \label{i:sos:iv}
There exists $r\in\cA_d$ s.t.
\begin{equation}\label{eq:1sosthm}
	f \ =\ r^*r.
\end{equation}

\end{enumerate} 
\end{thm}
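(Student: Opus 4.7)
The implications $(\ref{i:sos:iv})\Rightarrow(\ref{i:sos:iii})\Rightarrow(\ref{i:sos:i})\Rightarrow(\ref{i:sos:ii})$ are routine: $(\ref{i:sos:iv})\Rightarrow(\ref{i:sos:iii})$ by padding with zeros, $(\ref{i:sos:iii})\Rightarrow(\ref{i:sos:i})$ because on a self-adjoint tuple one has $r_i^*(Y)=r_i(Y)^*$ so $f(Y)=\sum r_i(Y)^*r_i(Y)\succeq 0$, and $(\ref{i:sos:i})\Rightarrow(\ref{i:sos:ii})$ since matrices are Hilbert space operators. For the substantive direction $(\ref{i:sos:ii})\Rightarrow(\ref{i:sos:iii})$, I would pursue the standard sos/Hahn--Banach/GNS scheme. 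Let $\Sigma_{2d}:=\{\sum_{i=1}^{N(d)} r_i^*r_i : r_i\in\cA_d\}\subseteq\cA_{2d}$ denote the sos cone; via the factorization $G=R^*R$ of the Gram matrix (of block size $N(d)\times N(d)$ with entries in $\cB(\cH)$, obtained from its operator square root), any sos representation compresses to exactly $N(d)$ summands. Assuming $f\notin\Sigma_{2d}$, the plan is to Hahn--Banach-separate $f$ from $\Sigma_{2d}$ by a linear functional $L:\cA_{2d}\to\C$ which is (a) non-negative on $\Sigma_{2d}$, (b) negative on $f$, and critically (c) WOT-continuous in each coefficient $P_w\in\cB(\cH)$; a GNS construction on $L$ will then yield a self-adjoint matrix tuple $Y\in M_n(\C)^\vg$ with $L(f)=\la 1,f(Y)1\ra<0$, contradicting (\ref{i:sos:ii}).

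The principal obstacle is the closedness of $\Sigma_{2d}$ in the appropriate topology. Following the two-stage blueprint previewed in the abstract, the first stage is to use the canonical self-adjoint tuple $A=(A_1,\ldots,A_\vg)$ arising from the left-regular representation on Fock space to prove a universal bound $\|G\|\le C\|p(A)\|$ on the operator norm of any psd Gram matrix certifying $p\in\Sigma_{2d}$. Combined with ultraweak compactness of operator-norm balls in $\cB(\cH)$, this forces $\Sigma_{2d}$ to be closed in the product ultraweak topology on the coefficients. The second stage is an approximation argument upgrading the ultraweakly separating Hahn--Banach functional to a WOT-continuous $L$; this refinement is essential because WOT-continuous functionals on $\cB(\cH)$ are precisely finite sums of vector functionals $P\mapsto\la\xi,P\eta\ra$, and this finite representability is exactly what will force the associated noncommutative Hankel matrix to be of finite rank.

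With such an $L$ secured, the noncommutative Hankel matrix $H=(L(v^*w))_{v,w\in\la x\ra_d}$ is positive semidefinite (by $L\ge 0$ on $\Sigma_{2d}$) and of finite rank (by WOT-continuity of $L$). A GNS-type construction on the finite-dimensional range of $H$ produces self-adjoint matrices $Y_j$ acting by (a suitable truncation of) left multiplication by $x_j$, together with the evaluation identity $L(f)=\la 1,f(Y)1\ra$; the inequality $L(f)<0$ then contradicts $f(Y)\succeq 0$ promised by (\ref{i:sos:ii}). Finally, to pass from $(\ref{i:sos:iii})$ to $(\ref{i:sos:iv})$ when $\cH$ is infinite-dimensional, I would stack the $r_i$ into a column polynomial $\bar r$ with coefficients in $\cB(\cH,\cH^{N(d)})$, giving $\bar r^*\bar r=f$; an isometry $V:\cH^{N(d)}\to\cH$ (which exists because $\cH$ is infinite-dimensional) then yields $r:=V\bar r\in\cA_d$ with $r^*r=\bar r^*V^*V\bar r=f$. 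I expect the Gram bound $\|G\|\le C\|p(A)\|$ together with the ultraweak-to-WOT passage to be the sole source of real difficulty; the GNS endgame and the stacking argument are comparatively routine.
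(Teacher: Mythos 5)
Your overall architecture---the routine implications, the Fock-space tuple $A$ used to bound Gram matrices, ultraweak closedness of the sos cone followed by an upgrade to a WOT-continuous separating functional, a finite-rank Hankel/GNS construction, and the stacking-plus-isometry argument for the single-square statement---is essentially the paper's. The genuine gap is in the GNS endgame, and it is a matter of degree bookkeeping that your sketch glosses over. You separate $f$ from the cone $\cC_d$ inside $\cA_{2d}$ and then build the Hankel matrix $H=(L(v^*w))_{v,w\in\la x\ra_d}$ together with operators $Y_j$ given by ``a suitable truncation of left multiplication by $x_j$''. But a functional $L$ defined only on $\cA_{2d}$ does not determine those truncated operators: making $Y_j$ well defined and self-adjoint on the GNS space requires the entries $L(v^*x_jw)$ with $|v|=|w|=d$, which are moments of degree $2d+1$, outside the domain of $L$. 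Moreover, even if some compression is chosen, the evaluation identity $L(f)=\langle f(Y)\gamma,\gamma\rangle$ fails in general for the degree-$2d$ part of $f$: computing $Y_{i_1}\cdots Y_{i_k}\gamma$ for $k>d$ picks up truncation errors, so $L(f)<0$ does not transfer to $\langle f(Y)\gamma,\gamma\rangle<0$ and the intended contradiction with (ii) collapses.

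The paper's remedy is the standard one (following McCullough--Putinar): since the highest-degree terms in a sum of squares cannot cancel, $f\notin\cC_d$ already forces $f\notin\cC_{d+1}$, so one separates $f$ from $\cC_{d+1}$ inside $\cA_{2d+2}$ (the closedness argument you outline applies verbatim one degree higher). The resulting functional is nonnegative on $p^*p$ for all $p\in\cA_{d+1}$; in Proposition~\ref{prop:GNS} the GNS space is built from the block matrix indexed by $\la x\ra_{d+1}$, and the $Y_j$ are compressions of left multiplication to the span of the degree-$\le d$ vectors, which is exactly what makes them well defined and self-adjoint and yields $\varphi(p)=\langle p(Y)\gamma,\gamma\rangle$ for all $p$ of degree at most $2d+1$, in particular for $f$. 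With this one adjustment your plan coincides with the paper's proof. Two smaller remarks: the Gram bound of Proposition~\ref{prop:bounded} only controls the norm-bounded truncations $\cC_{d,t}$ of the cone, so ultraweak closedness of the full cone requires the Krein--Smulian theorem (together with the extraction formula, which shows $p(A)$ determines the coefficients of $p$ and hence identifies the limit); and your isometry argument for (iii)$\Rightarrow$(iv) is correct and matches the ``moreover'' part of Lemma~\ref{lem:psd}.
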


\begin{remark} \rm
Several remarks related to Theorem~\ref{thm:sos} are in order. %
\begin{enumerate}[\rm(a)]\itemsep=5pt
\item Item~\ref{i:sos:iii} can also be phrased as a factorization result. 
Letting $r=\text{col} \begin{pmatrix} r_1 & \cdots & r_{N(d)}\end{pmatrix} \in 
\cA^{N(d)}=
\cB(\cH,\cH^{N(d)})\la x\ra$, \eqref{eq:sosthm} simply states
\[
	f\ =\ r^*r.
\]
We refer to \cite{BGK79,BGKR10,DW05,GW05,DR10} and the references therein for an in depth investigation of factorization.
\item
   That item~\ref{i:sos:iii} implies 
  item~\ref{i:sos:i}  implies item~\ref{i:sos:ii}  is trivial. The main content of Theorem~\ref{thm:sos} is that item~\ref{i:sos:ii}
  implies item~\ref{i:sos:iii}. A routine argument shows the equivalence between \eqref{eq:sosthm} and \eqref{eq:1sosthm}  in the infinite-dimensional case, see Remark~\ref{rem:soscone}.
\item
Our proof yields no bound on the size $n$ of matrices needed in item~\ref{i:sos:ii}. 
\item
From Theorem~\ref{thm:sos} one can easily 
deduce its version for free non-self-adjoint variables $z,z^*$ via the usual identification
$z_j\mapsto \real{z_j}= \frac{z_j+z_j^*}2$ and hence
$z_j^*\mapsto \imag{z_j}= \frac{z_j-z_j^*}{2i}$.
\qed
\end{enumerate}
\end{remark} The following result is the unitary version of Theorem~\ref{thm:sos}.

\begin{thm}\label{thm:usos}
For $f\in\ccA_{2d}$ the following are equivalent:
\begin{enumerate}[\rm(i)]\itemsep=5pt
\item \label{i:usos:i}
For any Hilbert space $\cK$ and any tuple of unitary operators  $U = (U_{1}, \dots, U_{\vg})\in\cB(\cK)^\vg$, $f(U)\succeq0$;
\item \label{i:usos:ii}
For any $n\in\N$ and any tuple of unitary matrices 
$U = (U_{1}, \dots, U_{\vg})\in M_n(\C)^\vg$, $f(U)\succeq0$;
\item \label{i:usos:iii}
There exist $r_1,\ldots,r_{N_{\rm red}(d)}\in\ccA_d$ s.t.
\begin{equation*}\label{eq:usosthm}
	f\ =\ \sum_{i=1}^{N_{\rm red}(d)} r_i^*r_i.
\end{equation*}
\end{enumerate}
If $\cH$ is infinite-dimensional, then the above statements are also equivalent to
\begin{enumerate}[\rm(i)]\itemsep=5pt
\item[\rm (iv)] \label{i:usos:iv}
There exists $r\in\ccA_d$ s.t.
\begin{equation*}
	f\ =\ r^*r.
\end{equation*}
\end{enumerate}
\end{thm}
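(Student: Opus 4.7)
The plan is to adapt, almost word for word, the sum-of-squares paradigm used for Theorem~\ref{thm:sos}, replacing the Fock space of the free monoid by $\ell^{2}(\freeg)$ and the canonical self-adjoint Fock tuple $A$ by the canonical unitary tuple $U=(\lambda(x_{1}),\dots,\lambda(x_{\vg}))$, where $\lambda\colon\freeg\to\cB(\ell^{2}(\freeg))$ is the left-regular representation. The implications \ref{i:usos:iii}$\Rightarrow$\ref{i:usos:i}$\Rightarrow$\ref{i:usos:ii} are trivial, and the single-square item (iv) follows from \ref{i:usos:iii} in the infinite-dimensional case by the standard column-stacking via an isometric embedding $\cH^{N_{\rm red}(d)}\hookrightarrow\cH$. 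The substance of the theorem is \ref{i:usos:ii}$\Rightarrow$\ref{i:usos:iii}, which I would carry out in a closedness step followed by a separation-plus-GNS step.

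For closedness, let
\[
\Sigma^{2}_{\rm trig}\ =\ \Bigl\{\sum_{i=1}^{N_{\rm red}(d)}r_{i}^{*}r_{i}\,:\,r_{i}\in\ccA_{d}\Bigr\}\ \subset\ \ccA_{2d}.
\]
Using that $U$ generates the reduced group $C^{*}$-algebra $C^{*}_{\rm red}(\freeg)$, I would prove a Fock-space estimate
\[
\|G_{r}\|\ \le\ C_{d}\,\|r(U)\|^{2}\qquad(r\in\ccA_{d}),
\]
where $G_{r}\succeq 0$ is the noncommutative Gram matrix of size $N_{\rm red}(d)\times N_{\rm red}(d)$ whose $(u,v)$-block is $R_{u}^{*}R_{v}$ for $r=\sum_{|w|\le d}R_{w}w$, and $C_{d}$ depends only on $d$ and $\vg$. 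This bound forces the coefficient matrices of any WOT-convergent net in $\Sigma^{2}_{\rm trig}$ into an ultraweakly compact set, yielding ultraweak closedness; a standard approximation step then upgrades this to WOT closedness of $\Sigma^{2}_{\rm trig}$.

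With $\Sigma^{2}_{\rm trig}$ WOT-closed, if $f\notin\Sigma^{2}_{\rm trig}$ then Hahn--Banach produces a WOT-continuous linear functional $L\colon\ccA_{2d}\to\C$ with $L|_{\Sigma^{2}_{\rm trig}}\ge 0$ and $\real L(f)<0$. From $L$ I would run a GNS construction: the noncommutative Hankel form $(p,q)\mapsto L(q^{*}p)$ on $\ccA_{d}\otimes\cH$ is positive semidefinite, and after quotienting by its kernel and completing one obtains a Hilbert space $\cK$ on which left multiplication by each $x_{j}$ defines an operator $U_{j}$. Because we are in the group algebra $\C[\freeg]$, where $x_{j}^{-1}$ exists and the involution \eqref{d:evalPunitary} sends $x_{j}$ to $x_{j}^{-1}$, each $U_{j}$ is automatically unitary rather than merely symmetric, which is precisely the new feature distinguishing this argument from the self-adjoint polynomial case. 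A routine computation then yields $f(U)\not\succeq 0$ on a distinguished vector, contradicting \ref{i:usos:i}. For the matrix version \ref{i:usos:ii}, one first reduces to $\cH$ finite-dimensional; the Hankel form then has finite rank, so $\cK$ is finite-dimensional and each $U_{j}$ becomes a unitary matrix.

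I expect the Gram bound $\|G_{r}\|\le C_{d}\|r(U)\|^{2}$ to be the main obstacle. In the self-adjoint Fock-space setting the analogous estimate relies on orthogonality of the vectors $X^{w}v_{\varnothing}$ across distinct monomials $w$, whereas for $\freeg$ the cancellation inherent in reduced words (e.g.\ $x_{i}x_{i}^{-1}=\varnothing$) obscures that orthogonality. A Haagerup-type inequality on $\ell^{2}(\freeg)$, or a carefully chosen block decomposition isolating the top-degree contributions, should deliver the bound and in particular account for the precise number $N_{\rm red}(d)$ of squares needed.
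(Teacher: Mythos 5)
Your overall architecture (coefficient/Gram bound from a canonical unitary tuple, ultraweak-then-WOT closedness of the cone, Hahn--Banach, GNS) matches the paper, and using the left regular representation $\lambda$ on $\ell^2(\freeg)$ in place of the paper's truncated unitaries is a legitimate variant for the bounding step. The genuine gap is in your GNS step, specifically the claim that left multiplication by $x_j$ ``automatically'' defines a unitary $U_j$ on the GNS space. Your functional $L$ is defined only on $\ccA_{2d}$ and is positive only on squares of elements of $\ccA_d$, so the GNS space is a \emph{truncated} space built from degree-$\le d$ polynomials; left multiplication by $x_j$ sends a word of length $d$ to one of length $d+1$, hence is not even defined on all of the space, and its would-be matrix entries involve values $L(q^*x_jp)$ of degree up to $2d+1$ that lie outside the domain of $L$. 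Nor can you sidestep this by separating $f$ from $\ccC_{d+1}$ inside $\ccA_{2d+2}$ as in the proof of Theorem~\ref{thm:sos}: in $\C[\freeg]$ the top-degree terms of $q^*q$ \emph{can} cancel (e.g.\ $x_j^{-1}x_j=1$), so $f\notin\ccC_d$ does not imply $f\notin\ccC_{d+1}$, which is exactly why the compression trick of Proposition~\ref{prop:GNS} is unavailable here. What is actually needed (and what the paper does in Subsection~\ref{ssec:uGNS}) is: for each letter $y\in\{x,x^{-1}\}$, multiplication by $y$ is a well-defined \emph{isometry} from the subspace spanned by classes of words $w$ with both $w$ and $yw$ of length $\le d$ onto its image, because $(yq)^*(yp)=q^*p$ and all relevant values of $L$ are defined (the Hankel identity $S_{v^{-1}w}=S_{(yv)^{-1}(yw)}$); since WOT continuity of $L$ forces the Hankel matrix $S$ to have finite rank, the GNS space $\cE$ is finite dimensional, the domain and range of this partial isometry have equal codimension, and it extends to a unitary $U_y$ on $\cE$. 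Without this partial-isometry-plus-extension argument your construction does not produce unitaries, and the proof does not close.

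Two further calibration points. First, the step you flag as the main obstacle is in fact the easy one: with the left regular representation, $\lambda(u)\delta_\varnothing=\delta_u$ and distinct reduced words give \emph{orthonormal} vectors, so $\langle p(\lambda)(\zeta\otimes\delta_\varnothing),\,\eta\otimes\delta_w\rangle=\langle P_w\zeta,\eta\rangle$ and hence $\|P_w\|\le\|p(\lambda)\|$ directly; the Gram bound then follows exactly as in Proposition~\ref{prop:bounded}, with no Haagerup-type inequality. (It is the self-adjoint tuple $A=L+L^*$ of the polynomial case that destroys orthogonality and requires the triangular extraction matrix; you have the difficulty backwards, though this does not damage your argument.) Second, your closedness sketch needs care: a WOT-convergent net in the cone need not be norm bounded, so one must first prove ultraweak closedness of the truncated cone and invoke Krein--Smulian, and ultraweak closedness of a convex cone does not by itself give WOT closedness --- the passage is through approximating the ultraweakly continuous separating functional by a WOT-continuous one, as in Lemma~\ref{lem:uweak2WOT} and Proposition~\ref{prop:closedcone}.
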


\begin{remark}[What's new?]\rm
\label{rem:whatsnew}
The passage to operator coefficients necessitates several novel
results and constructions that we expect to be of independent interest. At a high level, the proofs of Theorem~\ref{thm:sos} and Theorem~\ref{thm:usos} still follow the now standard paradigm for establishing sum of squares (sos) representations (factorizations).
Namely, the Hahn-Banach theorem produces a separating linear functional $\varphi$, and then a Gelfand-Naimark-Segal (GNS) construction
based on $\varphi$ ultimately produces a tuple $Y$.  
Here we 
roughly follow the outline of \cite{MP05}.  

A key construction is that of a tuple of self-adjoint operators $A$
based upon the left regular representation on Fock space; see Section~\ref{sec:fock}.
We then show that, up to a universal constant, for a sum of squares polynomial $p,$ 
 the norm
of $p(A)$ bounds the norm of any non-commutative psd Gram matrix $G$ that represents $p$; see Proposition~\ref{prop:bounded}.  
This uniform bound is the main input in Section~\ref{sec:top} for proving that the cone $\cC_d$ of sums of squares is closed in the product ultraweak topology on the coefficients. A separate approximation argument then replaces an ultraweak continuous separating functional by a WOT continuous one and hence yields closedness of $\cC_d$ in the product WOT. This two-step interplay between the ultraweak and WOT topologies enables an application of the Hahn-Banach separation theorem.

On the GNS side, we introduce a new argument that exploits the WOT to associate to a
separating linear functional a finite-rank psd noncommutative representing Hankel matrix and,
on its range, construct the desired tuple~$Y$; see Section~\ref{sec:gns}. For the unitary result,
Theorem~\ref{thm:usos}, we additionally modify the construction of Section~\ref{sec:fock} to produce a canonical
tuple of unitary operators from the left-regular representation and adapt the GNS procedure
to obtain a unitary tuple together with a representing vector that realizes the separating
functional as a vector state; see Subsection~\ref{ssec:uGNS}.\qed
\end{remark}

\subsection{Reader's guide}\label{ssec:readersguide}

The paper is structured as follows. The convex cone of sums of squares (making an appearance in \eqref{eq:sosthm} of Theorem~\ref{thm:sos}) is introduced and characterized in the next Section~\ref{sec:sos}. In Section~\ref{sec:fock} we define creation operators $L_i$ on the full Fock $\cF_\vg^2$ space and their symmetrized analogs $A_i$. How they pertain to the sum of squares statement at hand is explored in Subsection~\ref{ssec:coeff}, where evaluations at $A$ are used to extract coefficients of a polynomial. In Section~\ref{sec:top} we 
introduce a suitable topology on $\cA_\ad$ and
collect all the necessary topological properties needed in the sequel. With respect to this topology, the convex cone of sums of squares is closed, see Proposition~\ref{prop:closedcone}. The fact that the cone is closed allows for an application of the Hahn--Banach Separation Theorem, which is then followed by an appropriate version of the GNS construction, carried out in Section~\ref{sec:gns}; see Proposition~\ref{prop:GNS}. 
Then Theorem~\ref{thm:sos} is proved in Section~\ref{sec:proof} and
Theorem~\ref{thm:usos} is proved in Section~\ref{sec:uproof}.

\subsection*{Acknowledgment} The authors thank Dr. Matthias Schötz for carefully reading an earlier version of this manuscript and for pointing out gaps in the proof of Proposition~\ref{prop:closedcone}. 

\section{Convex Cone of (Sums of) Squares}\label{sec:sos}

In this section a key player in the proof of Theorem~\ref{thm:sos}, the convex cone of sums of squares of polynomials, is introduced and
studied. The main result in this section is Proposition~\ref{prop:closed under addition} (see also Remark \ref{rem:soscone}), which  gives a bound on the number of sums of squares needed to write a polynomial as a sum of squares.

\begin{lemma}\label{lem:psd}
If $T: \cH^{n} \to \cH^{n}$ be a psd linear map, then there exist linear maps $R_{i}: \cH \to \cH^{n},$ $i = 1, \dots, n,$ such that $T = \sum_{i=1}^{n} R_{i}R_{i}^{*}.$ Moreover, if $\cH$ is infinite-dimensional, then $T = R R^{*}$ for some $R : \cH \to \cH^{n}.$
\end{lemma}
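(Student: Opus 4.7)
The plan is to extract both statements from the positive square root $S := T^{1/2}$ of $T$, which exists because $T\succeq 0$ on $\cH^n$ and which satisfies $T = S S^* = S^2$ since $S$ is self-adjoint.

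For the first assertion, I would decompose the identity on $\cH^n$ using the canonical isometric inclusions $E_i : \cH \to \cH^n$ into the $i$-th coordinate (so $E_i^*$ is the projection onto the $i$-th factor). These satisfy $E_i^* E_i = I_{\cH}$ and
\[
\sum_{i=1}^n E_i E_i^* \ =\ I_{\cH^n}.
\]
Setting $R_i := S E_i : \cH \to \cH^n$ then gives $R_i R_i^* = S E_i E_i^* S$, and summing produces
\[
\sum_{i=1}^n R_i R_i^* \ =\ S\Bigl(\sum_{i=1}^n E_i E_i^*\Bigr) S \ =\ S^2 \ =\ T,
\]
as required.

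For the second assertion, when $\cH$ is infinite-dimensional its Hilbert space dimension $\kappa$ is infinite, whence $\dim \cH^n = n\kappa = \kappa = \dim\cH$; consequently there exists a unitary identification $V : \cH \to \cH^n$. Taking $R := S V : \cH \to \cH^n$ then yields
\[
R R^* \ =\ S V V^* S \ =\ S^2 \ =\ T.
\]

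There is really no significant obstacle; the argument is essentially a computation using the square root, together with a partition of the identity on $\cH^n$ for the first part and a unitary identification $\cH \cong \cH^n$ for the second. The only subtle point worth flagging is the existence of $V$, which reduces to the cardinal arithmetic $n\cdot\kappa=\kappa$ for any infinite cardinal $\kappa$, and so genuinely requires the infinite-dimensionality hypothesis in the \emph{Moreover} clause.
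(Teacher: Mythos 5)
Your proof is correct and takes essentially the same route as the paper: the paper factors $T=\tilde R\tilde R^{*}$ (the square root serving as one such $\tilde R$) and splits $\tilde R$ into block columns with respect to $\cH^{n}=\cH\oplus\dots\oplus\cH$, which is exactly your $R_{i}=SE_{i}$, and it handles the \emph{moreover} clause with an arbitrary unitary $U:\cH\to\cH^{n}$ just as you do. Your only addition is the explicit cardinality argument for the existence of that unitary, which the paper leaves implicit.
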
  
\begin{proof}

Since $T$ is psd, there exists a linear map $\tilde{R}: \cH^{n} \to \cH^{n}$ such that $T = \tilde{R} \tilde{R}^{*}.$ Write 
\[
  \tilde{R} \ =\  \begin{bmatrix}
	R_{1}, \dots, R_{n}
\end{bmatrix}
\]
with respect to the orthogonal decomposition $\cH^{n} = \cH \oplus \dots \oplus \cH.$ The first part of the lemma follows by noting that each $R_{i}$ is a map from $\cH$ into $\cH^{n}.$ For the moreover part, let $U: \cH \to \cH^{n}$ be any unitary, and set $R = \tilde{R} U.$
\end{proof}

Index \df{$\mathbb{\cH}^{N(d)}$} and \df{$\cA_{d}^{N(d)}$} (the algebraic direct sum of $\cA_{d}$ with itself $N(d)$ times) by $\la x \ra_{d}.$ 
Let $V_{d} \in \cA_{d}^{N}$ denote the \df{Veronese column vector} whose $w\in \la x \ra_{d}$ entry is $w$
(adopting the  usual convention of viewing $w$ as the the $\cB(\cH)$-valued polynomial $I_\cH \, w$). For instance, 
if $\vg=2$ and $d=2$, then  \index{$V_d$} 
\[
	V_2\ =\  \text{col} \begin{pmatrix}
	1 & x_1 & x_2 & x_1^2 & x_1 x_2 & x_2 x_1 & x_2^2
	\end{pmatrix}.
\]
 Let \df{$\cC_d$} denote the \df{cone of sums of squares} of polynomials of degree at most $d,$
 \begin{equation}\label{eq:sosdef}
 \cC_{d} \ :=\  \left\{ \sum\limits_{i=1}^{N(d)} r_{i}^{*}r_{i}: \quad r_{i}\in \cA_{d},  \quad  i=1, \dots, N(d) \right\}
\subseteq \cA_{2d}.
\end{equation}
Given $r\in \cA_d,$ the column vector  $R$  with $w$ entry $R_{w}^{*}$ 
is called the \df{coefficient vector} of $r$ since  $r = R^{*} V_{d}.$ In particular, 
\[
 r^* r \ =\  V_d^* RR^* V_d
\]
 so that  $r^*r$ has a representation  as  $V_d^* \SG V_d$ for a psd matrix $\SG.$

\begin{prop} \label{prop:closed under addition}
 A polynomial $p\in \cA_{2d}$ is in $\cC_d$ if and only if there is a psd block matrix $\SG$ such that
\begin{equation}
\label{e:gram-rep}
  p \ =\  V_d^* \SG V_d. 
\end{equation}
In fact, if $p=V_d^* \SG V_d,$ then factoring $\SG=\sum_{j=1}^{N(d)} R_jR_j^*$  with $R_j:\cH \to \oplus_{w\in \la x\ra_d}\cH$
as in Lemma~\ref{lem:psd}, 
setting $r_j = R_{j}^{*} V_{d} $
gives,
\[
 p \ =\ \sum_{j=1}^{N(d)} r_j^* r_j.
\]

In particular,  the set $\cC_d$  is a (convex) cone. %
\end{prop}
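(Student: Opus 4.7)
My plan is to establish the Gram representation in both directions and then read off the convex cone structure. The ``only if'' direction is a direct unfolding of the definition of coefficient vector, while the ``if'' direction, which also produces the displayed factorization, is the only step that requires a substantive input, namely Lemma~\ref{lem:psd}.

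For the ``only if'' direction, I would take $p = \sum_{i=1}^{N(d)} r_i^* r_i \in \cC_d$, write each $r_i = R_i^* V_d$ in terms of its coefficient vector $R_i\colon \cH \to \cH^{N(d)}$, and compute
\[
p \;=\; \sum_{i=1}^{N(d)} V_d^* R_i R_i^* V_d \;=\; V_d^* G V_d, \qquad G \;:=\; \sum_{i=1}^{N(d)} R_i R_i^* \;\succeq\; 0,
\]
which is the required psd Gram representation. For the ``if'' direction, suppose $p = V_d^* G V_d$ with $G$ psd on $\cH^{N(d)}$. Since $N(d)$ is precisely the dimension parameter of this ambient direct sum, Lemma~\ref{lem:psd} applies and supplies maps $R_j\colon \cH \to \cH^{N(d)}$, $j=1,\dots,N(d)$, with $G = \sum_{j=1}^{N(d)} R_j R_j^*$. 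Setting $r_j := R_j^* V_d \in \cA_d$ and substituting yields
\[
p \;=\; V_d^* G V_d \;=\; \sum_{j=1}^{N(d)} V_d^* R_j R_j^* V_d \;=\; \sum_{j=1}^{N(d)} r_j^* r_j,
\]
placing $p$ in $\cC_d$ and producing the displayed factorization.

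With the Gram characterization in hand, the convex cone claim follows at no additional cost. Closure under positive scalars is immediate by rescaling each $r_i$ by $\sqrt{\lambda}$. For closure under addition, given $p, q \in \cC_d$ with Gram representatives $G_p, G_q \succeq 0$ produced by the ``only if'' direction, the sum $p+q = V_d^*(G_p + G_q) V_d$ has $G_p + G_q \succeq 0$, and reapplying the ``if'' direction rewrites $p+q$ as a sum of exactly $N(d)$ squares rather than the naive $2N(d)$. This reduction is the substantive content of the proposition: the dimensional ceiling in Lemma~\ref{lem:psd} pins the number of summands to $N(d)$ irrespective of how many squares one starts with. That lemma is the only nontrivial ingredient in the argument; everything else is bookkeeping on coefficient vectors via the identity $r = R^* V_d$.
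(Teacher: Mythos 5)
Your proof is correct and follows essentially the same route as the paper: the ``only if'' direction is the coefficient-vector computation $p=\sum V_d^*R_iR_i^*V_d$, the ``if'' direction invokes Lemma~\ref{lem:psd} with $n=N(d)$ to factor $\SG=\sum_{j=1}^{N(d)}R_jR_j^*$ and set $r_j=R_j^*V_d$, and cone-closure under addition comes from adding Gram matrices, exactly as in the paper. Your observation that the lemma caps the number of squares at $N(d)$ (rather than $2N(d)$ after adding) is a fair gloss on why the argument works, but it is not a different method.
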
  

We call any psd block matrix $\SG$ satisfying equation~\eqref{e:gram-rep} a \df{Gram representation} for $p.$

\begin{proof}
Given a sum of squares $p =  \sum_{i=1}^{N(d)} r_{i}^{*}r_{i},$ writing $r_j=R_j^{*} V_d$ gives  
$p = \sum_{i=1}^{N(d)} V_{d}^{*} R_{i} R_{i}^{*} V_{d},$ where $R_{i}$ is the coefficient vector corresponding to the polynomial $r_{i}.$  It follows that  $p = V_{d}^{*} \SG V_{d},$ where $\SG = \sum_{i=1}^{N(d)} R_{i} R_{i}^{*}.$ In particular, $\SG : \cH^{N(d)} \to \cH^{N(d)}$ is a psd linear map.

Conversely, suppose there is a psd linear map $\SG:\oplus_{w\in \la x\ra_d} \cH \to \oplus_{w\in \la x \ra_d}\cH$ such that $p = V_d^* \SG V_d.$ 
By Lemma~\ref{lem:psd}, there exist $R_j: \cH\to \oplus_{w\in \la x \ra_d}\cH$ such  that
$\SG=\sum_{j=1}^{N(d)} R_j R_j^*.$ Setting 
$r_j =R_j^* V_d$, one obtains $p=\sum_{j=1}^{N(d)} r_j^*r_j.$

By what has already been proved,
if  $p,q \in \cC_{d},$ then there exist (psd) Gram representations  $p = V^{*} \SG_{p} V$ and $ q = V^{*} \SG_{q} V.$
Now $p + q = V_{d}^{*} (\SG_{p} + \SG _{q}) V_{d} .$ Since $\SG_{p} + \SG_{q} : \cH^{N(d)}  \to \cH^{N(d)}$ is a psd linear map,  
what has already been proved shows 
$p + q \in \cC_{d}.$ 
\end{proof}
	
\begin{corollary}\label{cor:soscone}
Letting   $V_{d} \in \cA_{d}^{N(d)}$ denote  the Veronese column vector,
the convex cone of sums of squares of degree at most $2d$ is
\[
\cC_{d} = \left\{ V_{d}^{*} \SG V_{d}: \quad \SG =  [\SG_{v,w}]_{v,w\in \la x \ra_{d}} \in \cB(\cH)^{N(d) \times N(d)}, \quad \SG \succeq 0 \right\}.
\]
\end{corollary}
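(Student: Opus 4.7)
The plan is that this corollary is essentially a cosmetic restatement of Proposition~\ref{prop:closed under addition}, so the proof will amount to unpacking definitions and invoking that proposition in both directions. No new ideas are needed; there is no substantive obstacle.

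First I would verify the inclusion $\cC_d \subseteq \{V_d^* \SG V_d : \SG \succeq 0\}$. Take any $p \in \cC_d$. By definition \eqref{eq:sosdef}, $p = \sum_{i=1}^{N(d)} r_i^* r_i$ with each $r_i \in \cA_d$. Writing each $r_i = R_i^* V_d$ in terms of its coefficient vector $R_i : \cH \to \oplus_{w \in \la x\ra_d}\cH$, one obtains $p = V_d^*\bigl(\sum_{i=1}^{N(d)} R_i R_i^*\bigr) V_d$, and $\SG := \sum_i R_i R_i^*$ is manifestly psd on $\cH^{N(d)}$, as recorded in the first part of Proposition~\ref{prop:closed under addition}.

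Conversely, I would argue the inclusion $\{V_d^* \SG V_d : \SG \succeq 0\} \subseteq \cC_d$. Given a psd block operator $\SG \in \cB(\cH)^{N(d)\times N(d)}$ (indexed by $\la x\ra_d$) and $p = V_d^* \SG V_d$, Lemma~\ref{lem:psd} factors $\SG = \sum_{j=1}^{N(d)} R_j R_j^*$ with $R_j : \cH \to \cH^{N(d)}$. Setting $r_j := R_j^* V_d \in \cA_d$ yields $p = \sum_{j=1}^{N(d)} r_j^* r_j$, which by the definition of $\cC_d$ places $p$ in the sos cone. This is precisely the content of the "if" direction of Proposition~\ref{prop:closed under addition}.

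The two inclusions combine to the claimed set equality, finishing the proof. The only thing to watch is that the indexing set for the psd block matrix is $\la x\ra_d$ (so $\SG \in \cB(\cH)^{N(d)\times N(d)}$), matching the domain/codomain of the coefficient vectors $R_j$, so that the Lemma~\ref{lem:psd} factorization and the identification $r_j = R_j^* V_d$ land inside $\cA_d$ as required. This consistency check is immediate from the setup introduced before Proposition~\ref{prop:closed under addition}.
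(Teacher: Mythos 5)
Your proposal is correct and follows the paper exactly: the corollary is an immediate restatement of Proposition~\ref{prop:closed under addition}, and you unpack the same two inclusions (coefficient-vector factorization $r_i=R_i^*V_d$ in one direction, Lemma~\ref{lem:psd} to split a psd $\SG$ in the other) that the paper's proof of that proposition already contains. Nothing further is needed.
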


\begin{remark}\rm\label{rem:soscone}
It follows from the preceding discussion that the convex cone $\cC_{d}$ takes the form 
\[ 
\cC_{d} \ =\ \{r^{*}r : \quad r \in \cA_{d}\}
\]
when the Hilbert space $\cH$ is infinite-dimensional.
\qed
\end{remark}
	
\section{Full Fock Space and Gram Matrices}\label{sec:fock}

This section recalls the well-known definition of the full Fock space \cite{AP95,JMS21}, the creation operators \cite{Fra84}, and introduces their symmetrized variants in \eqref{eq:symcreate} compressed to a suitable finite-dimensional subspace. In Subsection~\ref{ssec:coeff} we explore how these self-adjoint  operators  are used to extract the coefficients of a polynomial. The main result is Proposition~\ref{prop:bounded} showing that the set of 
positive semidefinite Gram matrices of a polynomial is norm bounded.

The full Fock space can be defined over any Hilbert space. 
The \df{full Fock space} over $\mathbb{C}^{\vg},$ denoted \df{$\cF^2_{\vg}$},  is: 
\[
  \cF^{2}_{\vg} \ =\  \bigoplus_{n=0}^{\infty} (\mathbb{C}^{\vg})^{\otimes n},
\]
where $(\mathbb{C}^{\vg})^{\otimes 0} := \mathbb{C}$ represents the \df{vacuum vector} \df{$\Omega$}.
 Thus elements of $\cF^{2}_{\vg}$ are sequences $(\psi_{0}, \psi_{1}, \psi_{2}, \dots )$ with $\psi_{n} \in (\mathbb{C}^{\vg})^{\otimes n}$ and $\| (\psi_{0}, \psi_{1}, \psi_{2}, \dots ) \|^{2} = \sum\limits_{n=0}^{\infty} \|\psi_{n}\|^{2} < \infty.$
	
\subsection{Basis} 
Let $\{e_{1}, \dots, e_{\vg}\}$ be any orthonormal basis of $\mathbb{C}^{\vg}.$ With any $w = x_{i_{1}} \dots x_{i_{n}} \in \la x\ra,$ associate a vector 
\[
e_{w} \ =\  e_{i_{1}} \otimes \dots \otimes e_{i_{n}} \in (\mathbb{C}^{\vg})^{\otimes n}.
\]
The set $\{e_{w}: w \in \la x\ra \}$ forms an orthonormal basis for $\cF^{2}_{\vg}$, with $e_{\varnothing}$ corresponding to the vacuum vector $\Omega.$
	
\subsection{Left creation operators}
For each $i =1, \dots, \vg,$ define the \df{left creation operator}  \df{$L_i$} on $\cF^{2}_{\vg}$ by 
\begin{equation}\label{eq:create}
 L_{i} (e_{w}) \ =\  e_{x_{i} w} \in (\mathbb{C}^{\vg})^{\otimes (|w|+1)}, \quad (w \in \la x\ra).
 \end{equation}
Clearly, each $L_{i}$ is an isometry. Moreover $L_{i}^{*} L_{j} = 0 $ if $i \neq j.$ Thus the tuple $L = (L_{1}, \dots, L_{\vg})$ 
is a \df{row isometry}. Let \df{$\AB$}
\begin{equation*}
\AB_{i} \ =\  L_{i} + L_{i}^{*}, \quad i =1, \dots, \vg.
\end{equation*}

Fix a positive integer $\ad$. Let $\Ftgd$ denote the subspace of $\cF^{2}_{\vg}$ spanned by 
$\{e_{w}: w \in \la x\ra_{\ad} \}$
and $\iota=\iota_\ad:\Ftgd\to \cF^{2}_{\vg}$ the inclusion. Thus, for instance, for $|w|\le \ad,$
\begin{equation}
    \label{e:Lw}
 \iota^*L_i \iota e_w = \iota^* L_i e_w =\begin{cases} e_{x_i w} & \text{ if } |w|<\ad \\
          0 & \text{ if } |w|=\ad. \end{cases}
\end{equation}
Similarly, if $|v|\le \ad,$ then 
\begin{equation}
    \label{e:Lastv}
 \iota^* L_i^* \iota e_v = \iota^*L_i^* e_v = \begin{cases}  e_u & \text{ if } v=x_i u 
  \\ 0 & \text{otherwise}. \end{cases} 
\end{equation}
Let \df{$A$} be the $\vg$-tuple of operators defined as 
\begin{equation}
\label{eq:symcreate}
 A_i = \iota^* \AB_i \iota = \iota^* \left(L_i + L_i^* \right) \iota.
\end{equation}
 From equations~\eqref{e:Lw} and \eqref{e:Lastv}, $A_i w = \AB_i w$ for $|w|<\ad.$ 
 While $A$ depends on $\ad$, we suppress this dependence in the notation for readability; when $\ad$ is not clear from the context, we write $A = A^{(\ad)}$.

\begin{lemma} \label{lem:faithful}
For any $w \in \la  x \ra_{\ad}$,
\[
A^w \Omega \ =\  e_w + \sum_{|v|<|w|} c_{v,w} e_v 
\]
for some scalars $c_{v,w}$.
\end{lemma}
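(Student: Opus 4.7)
The plan is to induct on $|w|$. The base case $|w|=0$ is $w=\varnothing$, where $A^\varnothing\Omega=\Omega=e_\varnothing$ with no lower-order terms, so the claim holds trivially.

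For the inductive step, write $w = x_{i_1} u$ with $u \in \la x\ra$, so $|u|=|w|-1\le \ad-1$. By the induction hypothesis,
\[
A^u \Omega \ =\ e_u + \sum_{|v|<|u|} c_{v,u}\, e_v.
\]
Since every vector $e_z$ appearing on the right has length at most $|u| < \ad$, formulas \eqref{e:Lw} and \eqref{e:Lastv} apply. Concretely, for each such $e_z$,
\[
A_{i_1}e_z \ =\ \iota^* L_{i_1} \iota e_z + \iota^* L_{i_1}^* \iota e_z \ =\ e_{x_{i_1} z} + \epsilon_z\, e_{z'},
\]
where $\epsilon_z = 1$ and $z=x_{i_1} z'$ if the first letter of $z$ is $x_{i_1}$, and $\epsilon_z = 0$ otherwise.

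Applying this to the leading term $e_u$ produces $e_{x_{i_1} u} = e_w$ plus possibly a term $e_{u'}$ of length $|u|-1 = |w|-2 < |w|$. Applying it to each remaining summand $e_v$ with $|v|<|u|$ produces $e_{x_{i_1}v}$ of length $|v|+1 \le |u| < |w|$, plus possibly an $e_{v'}$ of length $|v|-1 < |w|$. Collecting terms gives
\[
A^w\Omega \ =\ A_{i_1}(A^u\Omega) \ =\ e_w + \sum_{|v|<|w|} c_{v,w}\, e_v
\]
for suitable scalars $c_{v,w}$, completing the induction.

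There is no real obstacle here; the only point requiring care is checking that $A_j$ acts as the unrestricted sum $L_j + L_j^*$ on every intermediate vector that appears, which is guaranteed because at each stage in the induction the relevant vectors $e_z$ satisfy $|z| < \ad$, so the truncation $\iota\iota^*$ is harmless.
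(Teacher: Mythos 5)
Your proof is correct and follows essentially the same route as the paper: induction on word length, peeling off the first letter $x_{i_1}$ and using the action of $L_{i_1}$ and $L_{i_1}^*$ on basis vectors of length less than $\ad$. Your explicit check that the compression $\iota^*(\cdot)\iota$ is harmless on these vectors is exactly the observation the paper makes right after \eqref{eq:symcreate} ($A_j e_w = \AB_j e_w$ for $|w|<\ad$), so nothing is missing.
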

\begin{proof}
The proof proceeds by  induction on word length.  If $w$ is the empty word, then $A^w \Omega = \Omega = e_\varnothing.$ 
 Assume $n\le \ad$ and the claim holds for all words of length $\le n-1$. Let $w = x_{i_1} \dots x_{i_n} =x_{i_1} \widetilde{w}$  be of length $n.$ Then
\begin{align*}
 A^w \Omega 
 \ =&\  L_{i_{1}} A^{\tilde{w}} \Omega + L_{i_{1}}^{*} A^{\tilde{w}} \Omega,
\end{align*}
 By the induction hypothesis, 
\begin{align*}
A^w \Omega 
\ =&\   L_{i_{1}} \left( e_{\tilde{w}} + \sum\limits_{|v| < n-1} c_{v, \tilde{w}} e_{v}\right) + L_{i_{1}}^{*} \left( e_{\tilde{w}} + \sum\limits_{|v| < n-1} c_{v, \tilde{w}} e_{v}\right) \\
\ =&\   e_{w}  + \left(\sum\limits_{|v| < n-1} c_{v, \tilde{w}} e_{x_{i_{1}}v} \right) + L_{i_{1}}^{*} e_{\tilde{w}} +
\left(\sum\limits_{|v| < n-1} c_{v, \tilde{w}} L_{i_{1}}^{*}e_{v}\right). 
\end{align*}
For any word $u$ of length $k,$ $L_{i_{1}}^{*} e_{u}$ is either zero or is $e_{\tilde{u}},$ for some word $\tilde{u}$ of length $k-1,$
and the proof is complete.
\end{proof}

\begin{lemma} \label{lem:invertible M}
The $N(\ad) \times N(\ad)$ scalar matrix 
\[
\ME_{\ad} \ =\  \begin{bmatrix}
\langle A^{w} \Omega, e_{v} \rangle\end{bmatrix}_{v,w\in\la x\ra_{\ad}}
\]
is invertible.
\end{lemma}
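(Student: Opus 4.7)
The plan is to exploit Lemma~\ref{lem:faithful} directly: it gives a precise triangular description of how $A^w\Omega$ expands in the basis $\{e_v\}$, and this triangularity will force $\ME_\ad$ to be unitriangular (in a suitable ordering), hence invertible.

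Concretely, I would order the index set $\la x\ra_\ad$ by the graded lexicographic order fixed in Subsection~\ref{ssec:notation}, so that shorter words come first. Fix a column index $w$ of $\ME_\ad$. By Lemma~\ref{lem:faithful},
\[
A^w \Omega \ =\  e_w + \sum_{|v|<|w|} c_{v,w}\, e_v,
\]
so the $(v,w)$-entry $\langle A^w\Omega, e_v\rangle$ equals $1$ when $v=w$, equals $c_{v,w}$ when $|v|<|w|$, and vanishes whenever $|v|>|w|$. With the graded lex ordering, any row index $v$ strictly below the diagonal (i.e., coming after $w$) satisfies $|v|\ge |w|$ with $v\ne w$, and either $|v|>|w|$ (entry zero) or $|v|=|w|$ with $v\ne w$ (entry also zero, since $A^w\Omega$ has no nonzero component on $e_v$ for $|v|=|w|$, $v\ne w$). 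Hence $\ME_\ad$ is upper triangular with every diagonal entry equal to $1$.

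An upper triangular matrix with $1$'s on the diagonal has determinant $1$ and is therefore invertible, which is the conclusion. No further work is needed beyond reading off Lemma~\ref{lem:faithful}; the only subtlety is the choice of ordering that makes the triangular structure manifest, and the graded lexicographic order already fixed in the notation section does the job. I do not anticipate any genuine obstacle here, as the lemma has already done the combinatorial work of isolating the leading term $e_w$ and confining the remainder to strictly shorter words.
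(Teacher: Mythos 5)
Your proposal is correct and follows essentially the same route as the paper: both arguments read the expansion in Lemma~\ref{lem:faithful} to conclude that, in the graded lexicographic order, $\ME_\ad$ is upper triangular with $1$'s on the diagonal, hence invertible. Your spelling out of the two sub-cases ($|v|>|w|$ versus $|v|=|w|$, $v\ne w$) just makes explicit what the paper leaves implicit.
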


\begin{proof}
Recall that we have endowed $\la x \ra$ with graded lexicographic order. If $\ad\ge |v|$ and  $v>w$, then  
\[
\langle  A^{w} \Omega, e_{v} \rangle \ =\  0 
\]
by Lemma~\ref{lem:faithful}. Hence, $\ME_\ad$ is upper triangular.
 Moreover, each diagonal entry is $1$ by Lemma~\ref{lem:faithful}. Thus, $\ME_{\ad}$ is invertible.
\end{proof}
	
\subsection{Extraction formula for coefficients}\label{ssec:coeff}
Let $q = \sum Q_{w} w \in \cA_{\ad}.$ For $v\in \la x\ra_{\ad}$ 
define the linear functional
$\Omega_{v} : \cB(\Ftgd) \to \mathbb{C}$ 
by
\[
  \Omega_{{v}}(T) = \langle  T \Omega, e_{v} \rangle. 
\]
The operator coefficients \(Q_v\) are obtained from $q(A)$  by solving the linear system
\begin{align*} 
Z_{v}(q) \ :=&\ (\mathrm{id}_{\cB(\cH)} \otimes \Omega_{v}) q(A) \ =\   \sum\limits_{w} Q_{w} \otimes \Omega_{v} (A^{w}) \\
\ =&\   \sum\limits_{w} \langle  A^{w} \Omega , e_{v} \rangle \, Q_{w} \ =\   \sum\limits_{w} [\ME_{\ad}]_{v,w} Q_{w},
\end{align*}
where $[\ME_{\ad}]_{v,w}$ is the $(v,w)$ entry of the matrix $\ME_{\ad}.$ In short, \index{$[\ME_d]$}
\begin{equation}\label{eq:coeff2}
Z(q) \ =\  \ME_{\ad} Q,
\end{equation}
where $Z(q)$ and $Q$ are column vectors with $Z_{v}(q)$ and $Q_{v}$  as the $v^{\rm th}$ entry of $Z$ and $Q,$ respectively. 
Since,  by Lemma~\ref{lem:invertible M}, $\ME_d$ is invertible, 
\begin{equation} \label{eq:coeff}
Q \ =\  \ME_{\ad}^{-1} Z(q).
\end{equation} 
{We refer to $\ME$ as the \df{extraction matrix}, and equation~\eqref{eq:coeff} as the \df{extraction formula} for the coefficients of $q.$ Note 
that this formula depends only upon $q(A);$ that is, the coefficients of $q$ are determined uniquely
by $q(A).$}

It follows from equation~\eqref{eq:coeff}  that  there exists a positive constant $\lambda_{\ad}$ (independent of $q$) such that 
\begin{equation} \label{eq:Coeff bound}
 \|Q_{w}\| \ \leq\  \lambda_{\ad} \, \|q(A)\| \quad \text{ for all $w \in \la x\ra_{\ad}$.}
\end{equation}
	
\begin{prop} \label{prop:bounded}
If  $p \in \cC_{d},$ then the set 
\[
 \Gamma_p \ =\  \{\, \SG \in \cB(\cH)^{N(d)\times N(d)} \ : \quad \SG \succeq 0, \quad   V_d^* \SG V_d = p \,\}
\]
 is norm bounded $($with respect to the operator norm on $\cB(\cH^{N(d)})$$)$. More precisely, 
 there exists a constant $\mu_{d}$ $($depending only on $d$ and $\vg$ and not on $p$$)$ such that,
 for all $\SG\in\Gamma_p,$ 
\[ 
\|\SG\| \ \leq\  \mu_{d} \; \|p(A)\| ,
\]
where the tuple $A=A^{(\ad)}$ is defined in \eqref{eq:symcreate}
for any $\ad\geq 2d$. 
\end{prop}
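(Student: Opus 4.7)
The plan is to reduce a bound on any Gram representation $\SG\in\Gamma_p$ to a bound on each polynomial in an associated sum-of-squares decomposition, and then apply the extraction formula \eqref{eq:coeff}. Concretely, given $\SG\in\Gamma_p$, Lemma~\ref{lem:psd} (used exactly as in the proof of Proposition~\ref{prop:closed under addition}) produces a decomposition $\SG=\sum_{j=1}^{N(d)} R_jR_j^{*}$ with $R_j:\cH\to\cH^{N(d)}$, and setting $r_j=R_j^{*}V_d\in\cA_d$ yields $p=\sum_{j=1}^{N(d)} r_j^{*}r_j$. The norms $\|R_j\|$ will be controlled via the operator coefficients of $r_j$, which in turn are controlled by $\|r_j(A)\|$ via \eqref{eq:Coeff bound}, and the operator-ordering on $p(A)$ will give $\|r_j(A)\|\le\|p(A)\|^{1/2}$.

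Carrying this out step by step: first, since $A$ is a tuple of self-adjoint operators, evaluation is a $*$-homomorphism on $r_j^{*}r_j$, so
\[
p(A)\ =\ \sum_{j=1}^{N(d)} r_j(A)^{*}\, r_j(A),
\]
a sum of psd operators. Each summand is dominated by the sum in the psd order, and the operator norm is monotone on psd operators, so $\|r_j(A)\|^{2}=\|r_j(A)^{*}r_j(A)\|\le\|p(A)\|$. Second, since $r_j\in\cA_d\subseteq\cA_\ad$, the extraction bound \eqref{eq:Coeff bound} applies and yields $\|(R_j)_w\|\le\lambda_\ad\|r_j(A)\|\le\lambda_\ad\|p(A)\|^{1/2}$ for every $w\in\la x\ra_d$, where $(R_j)_w\in\cB(\cH)$ are the operator coefficients of $r_j$. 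Third, since $R_j$ is the column with $w$-entry $(R_j)_w^{*}$, one has $R_j^{*}R_j=\sum_w (R_j)_w (R_j)_w^{*}$, giving $\|R_j\|^{2}=\|R_j^{*}R_j\|\le\sum_w\|(R_j)_w\|^{2}\le N(d)\,\lambda_\ad^{2}\,\|p(A)\|$. Summing over $j$ produces
\[
\|\SG\|\ \le\ \sum_{j=1}^{N(d)}\|R_jR_j^{*}\|\ =\ \sum_{j=1}^{N(d)}\|R_j\|^{2}\ \le\ N(d)^{2}\,\lambda_\ad^{2}\,\|p(A)\|,
\]
so the bound holds with $\mu_d=N(d)^{2}\lambda_\ad^{2}$ (e.g.\ for the fixed choice $\ad=2d$), which depends only on $d$ and $\vg$.

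The main conceptual point, and the only non-routine step, is the passage from the single inequality $\SG\succeq 0$ and $V_d^{*}\SG V_d=p$ to a bound on each $r_j(A)$ individually: this is precisely where the self-adjointness of $A$ is essential, since it is what guarantees that $(r_j^{*}r_j)(A)=r_j(A)^{*}r_j(A)\succeq 0$ and hence that each summand in $p(A)$ inherits the full positivity. Once that operator-ordering argument is in place, the proof becomes mechanical: the extraction formula and the column-operator identity $\|R_j\|^{2}=\|\sum_w(R_j)_w(R_j)_w^{*}\|$ convert entry-wise bounds into bounds on $R_j$, at a cost of a factor polynomial in $N(d)$. Crucially, the final estimate is independent of which factorization of $\SG$ was chosen, so it controls $\SG$ itself and therefore bounds the whole set $\Gamma_p$ uniformly.
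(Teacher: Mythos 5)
Your proposal is correct and follows essentially the same route as the paper: factor $\SG=\sum_j R_jR_j^{*}$ via Lemma~\ref{lem:psd}/Proposition~\ref{prop:closed under addition}, use the psd ordering $r_j(A)^{*}r_j(A)\preceq p(A)$ to get $\|r_j(A)\|^{2}\le\|p(A)\|$, and then control the coefficients through the extraction bound \eqref{eq:Coeff bound}. The only deviation is cosmetic bookkeeping at the end—you bound $\|\SG\|\le\sum_j\|R_j\|^{2}$ rather than summing the norms of all block entries—which yields the marginally better constant $N(d)^{2}\lambda^{2}$ in place of the paper's $N(d)^{3}\lambda^{2}$.
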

	
\begin{proof}
Fix $p \in \cC_{d}$ and $\SG \in \Gamma_p.$ Thus $p=V_d^* \SG V_d.$ By
Proposition~\ref{prop:closed under addition}, there exists $Q_j: \cH\to \oplus_{w\in\la x\ra_d} \cH$ such that
\begin{equation}
\label{e:bound:1}
p \ =\  \sum\limits_{j=1}^{N(d)} q_{j}^{*}q_{j} \ =\  V_d^* \left [ \sum\limits_{j=1}^{N(d)} Q_{j} Q_{j}^* \right ] V_d,
\end{equation}
where
\[
 q_j \ =\   Q_j^* V_d = \sum_{w\in\la x\ra_d} Q_{j,w} w.
\]
 By equation~\eqref{eq:Coeff bound}, for $v\in\la x \ra_d,$
\[
 \| Q_{j,v}\| \ \le\  \lambda_d\; \|q_j(A)\|.
\]
From equation~\eqref{e:bound:1}, 
\[
  \|q_j(A)\|^2 \ =\  \|q_j(A)^* q_j(A)\| \ \le\  \|p(A)\|.
\]
Thus, again using equation~\eqref{e:bound:1},
\[
  \sum_{u,v\in \la x \ra_d} \|\SG_{u,v}\| \ \le\   \sum_{u,v\in \la x \ra_d} \, \sum_{j=1}^{N(d)} \|Q_{j,u}Q_{j,v}^*\| 
  \ \le\  N(d)^3 \lambda_d^{2} \; \|p(A)\|.
\]
It follows that 
$\|\SG\|\le \mu_d \, \|p(A)\|$ for $\mu_d = N(d)^3 \lambda_d^{2}.$
\end{proof}
	
\section{Topology on \texorpdfstring{$\cA_{\ad}$}{Ad}}\label{sec:top}
The main purpose of this section is to define a well-behaved topology on $\cA_{\ad}$ in which the convex cone of sums of squares 
is closed.

To each polynomial in  $\cA_{\ad}$ we associate the vector of its coefficients as an element in $\cB(\cH)^{\la x\ra_{\ad}}.$
The topology on $\cA_{\ad}$ is then the topology induced from the product WOT on $\cB(\cH)^{{\la x\ra_{\ad}}}.$ 
Alternately, in this topology a net  $(p_{\alpha} = \sum_{w} P_{\alpha, w} w)_{\alpha}$ in $\cA_{\ad}$  converges to $p = \sum_{w} P_{w} w\in \cA_{\ad}$ if {and only if} for each $w \in \la x \ra_{\ad}$, the net of operators  $(P_{\alpha,w})_\alpha$ converges to $P_w$ in WOT. Note that $\cA_{\ad}$ becomes a locally convex topological vector space with this topology.  We refer to this  topology as the WOT on $\cA_{\ad}$ or the \df{product WOT topology}. It is the default topology on $\cA_{\ad};$ that is, unless otherwise stated, it is the topology on $\cA_{\ad}.$

Exclusive to this section, we will also use a product ultraweak topology on $\cA_{\ad}.$
The ultraweak topology on $\cB(\cH)$ is the weak-$*$ topology on $\cB(\cH)$ induced by the
predual $\cT(\cH)$, the trace class operators on $\cH.$
It is the weakest topology such that predual elements remain continuous on $\cB(\cH).$ 
As before, to each polynomial in  $\cA_{\ad}$, we associate a vector of its coefficients as an element in $\cB(\cH)^{\la x\ra_\ad}.$
The ultraweak topology on $\cA_\ad$ is then the topology induced from the product ultraweak topology on $\cB(\cH)^{{\la x\ra_\ad}}.$ Alternately, in this topology a net  $(p_{\alpha} = \sum_{w} P_{\alpha, w} w)_{\alpha}$ in $\cA_{\ad}$  converges to $p = \sum_{w} P_{w} w\in \cA_{\ad}$ if {and only if} for each $w \in \la x \ra_{\ad}$, the net of operators  $(P_{\alpha,w})_\alpha$ converges to $P_w$ in ultraweak topology for all $w \in \la x \ra_{\ad}$. Note that $\cA_{\ad}$ becomes a locally convex topological vector space with this topology.

\subsection{Closedness of the cone \texorpdfstring{$\cC_d$}{Cd}}
The main goal of this subsection is to establish the following result. In the rest of this section fix an integer $\ad\geq 2d$, and set $A=A^{(\ad)}$.

\begin{prop}\label{prop:closedcone}
The convex cone $\cC_{d}$ is closed in $\cA_{\ad}$.
\end{prop}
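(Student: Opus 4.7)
The plan rests on the uniform Gram-matrix bound of Proposition~\ref{prop:bounded} and proceeds in two steps: first establish closedness of $\cC_d$ in the (finer) product ultraweak topology on $\cA_{\ad}$, then upgrade it to the (coarser) WOT by approximating separating functionals.

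For the ultraweak step, identify $\cA_{\ad}$ with $\cB(\cH)^{N(\ad)}$ via the coefficient map and equip it with the sup of operator norms on the coefficients; it is then a dual Banach space with predual $\cT(\cH)^{N(\ad)}$, and the product ultraweak topology coincides with its weak-$*$ topology. By the Krein--Smulian theorem it suffices to show that $\cC_{d}\cap r\overline B$ is ultraweakly closed for every $r>0$, where $r\overline B$ is the ball of polynomials whose coefficient operator norms are all at most $r$. For a net $(p_{\alpha})\subset\cC_{d}\cap r\overline B$ converging ultraweakly to $p$, the triangle inequality uniformly bounds $\|p_{\alpha}(A)\|$ (the tuple $A$ acts on the finite-dimensional $\Ftgd$), so Proposition~\ref{prop:bounded} provides psd Gram matrices $G_{\alpha}$ with $\|G_{\alpha}\|\le\mu_{d}\|p_{\alpha}(A)\|$ uniformly bounded. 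Banach--Alaoglu yields a WOT-convergent subnet $G_{\beta}\to G$ (WOT and ultraweak agree on bounded sets of $\cB(\cH^{N(d)})$); $G$ is psd since the psd cone is WOT-closed, and the map $H\mapsto V_{d}^{*}HV_{d}$ is ultraweakly continuous, forcing $p=V_{d}^{*}GV_{d}\in\cC_{d}$.

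To pass to WOT closedness, given $p\notin\cC_{d}$ use the ultraweak closedness just established together with the Hahn--Banach separation theorem to find an ultraweakly continuous functional
\[
\varphi(q)\ =\ \sum_{w\in\la x\ra_{2d}}\operatorname{tr}(Q_{w}K_{w}),\qquad K_{w}\in\cT(\cH),
\]
satisfying $\varphi(p)>0$ and $\varphi\le 0$ on $\cC_{d}$. By Proposition~\ref{prop:closed under addition}, a direct computation gives $\varphi(V_{d}^{*}GV_{d})=\operatorname{tr}(GH_{\varphi})$, where $H_{\varphi}$ is the noncommutative Hankel-type block operator on $\cH^{N(d)}$ with $(\alpha,\beta)$-block $K_{\beta^{*}\alpha}$; the separating inequality is equivalent to $H_{\varphi}\preceq 0$. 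For a finite-rank orthogonal projection $P$ on $\cH$, set $\tilde K_{w}=PK_{w}P$ and $\tilde\varphi(q)=\sum_{w}\operatorname{tr}(Q_{w}\tilde K_{w})$. Each $\tilde K_{w}$ is finite-rank, so $\tilde\varphi$ is WOT-continuous; the blockwise compression of $H_{\varphi}$ by $P$ remains negative semidefinite, hence $\tilde\varphi\le 0$ on $\cC_{d}$. As $P$ ranges over a net of finite-rank projections increasing to $I$ in SOT, each $\tilde K_{w}\to K_{w}$ in trace norm (a standard property of trace-class operators), so $\tilde\varphi(p)\to\varphi(p)>0$; fixing $P$ large enough gives $\tilde\varphi(p)>0$. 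The existence of this WOT-continuous separating functional shows $p$ is not in the WOT closure of $\cC_{d}$.

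The main obstacle lies in the second step. Because a WOT-convergent net of sos polynomials need not have bounded coefficients, Step 1's Gram-matrix extraction cannot be applied directly in WOT. The Hankel-type identification of the condition $\varphi\le 0$ with $H_{\varphi}\preceq 0$ is the key structural ingredient enabling the finite-rank compression to simultaneously preserve separation on the unbounded cone $\cC_d$ and produce a WOT-continuous functional.
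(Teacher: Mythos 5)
Your proposal is correct and follows essentially the same two-step route as the paper's proof: ultraweak closedness of $\cC_d$ via the uniform Gram-matrix bound of Proposition~\ref{prop:bounded}, Banach--Alaoglu and Krein--Smulian, followed by upgrading the Hahn--Banach separating functional from ultraweak to WOT continuity by compressing its trace-class Hankel-block data with finite-rank projections (the paper's Lemmas~\ref{lem:approx_1} and \ref{lem:uweak2WOT}). The only cosmetic differences are that the paper identifies the limit Gram representation through evaluation at the tuple $A$ and the extraction formula~\eqref{eq:coeff} rather than through continuity of $G\mapsto V_d^*GV_d$ on bounded sets, and it spells out the routine details you compress, e.g.\ that nonpositivity of the trace pairing against all psd $G$ forces the Hankel block matrix to be self-adjoint (and the real-part bookkeeping in the Hahn--Banach step).
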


Firstly, since $\cA_{2d}$ is closed in $\cA_{\ad}$, it suffices to show $\cC_d$ is closed in $\cA_{2d}$ (in its product WOT topology).
The proof of Proposition~\ref{prop:closedcone} proceeds in two steps. We first show that the cone $\cC_{d}$ is closed with respect to the stronger  ultraweak topology.
Next, we prove that if a polynomial in $\cA_{2d}$ can be separated from the cone $\cC_{d}$ by an ultraweak continuous linear functional, then it can also be separated by a (possibly different) WOT continuous linear functional. This implication will yield the desired closedness of $\cC_{d}$.

\subsubsection{Closedness of \texorpdfstring{$\cC_d$}{Cd} in the product ultraweak topology} Let $\cT(\cH)$ be the space of trace-class operators on $\cH.$ The space $\cT(\cH)^{\la x \ra_{\ad}}$ with the norm 
\[
\|(T_{1}, \dots, T_{N(\ad)})\|_{1} \ =\ \sum\limits_{i=1}^{N(\ad)} \|T_{i}\|_{1}, 
\]
is a Banach space, where $\|\cdot\|_{1}$ denotes the trace-class norm of $\cT(\cH)$. The dual of $\cT(\cH)^{\la x \ra_{\ad}}$ is $\cB(\cH)^{\la x \ra_{\ad}}$ with the induced norm 
\[
\|(T_{1}, \dots, T_{N(\ad)})\| \ =\ \max\limits_{1 \leq i \leq N(\ad)}\, \|T_{i}\|,
\]
where $\|\cdot\|$ denotes the operator norm of $\cB(\cH).$ Via the canonical identification, we can endow $\cA_{\ad}$ with a norm. For $p = \sum\limits_{w \in \la x \ra_{\ad}} P_{w}w$, we define
\[
\| p\| \ :=\ \max\limits_{w \in \la x \ra_{\ad}}\, \|P_{w}\|. 
\]

\begin{lemma}
For any $t  > 0$, the truncated convex set 
\[
\cC_{d,t} \ :=\ \{ p \in \cC_{d} \ :\ \|p\| \leq t \}
\]
is closed in the ultraweak topology on $\cA_{\ad}.$ 
\end{lemma}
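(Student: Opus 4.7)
The plan is to close $\cC_{d,t}$ via a compactness argument on Gram matrices. Let $(p_\alpha)_\alpha \subseteq \cC_{d,t}$ converge ultraweakly to some $p \in \cA_\ad$. By Proposition~\ref{prop:closed under addition}, pick psd Gram representations $p_\alpha = V_d^* \SG_\alpha V_d$ with $\SG_\alpha \in \cB(\cH^{N(d)})$. The strategy is to uniformly bound $\|\SG_\alpha\|$, extract an ultraweak cluster point $\SG \succeq 0$, and verify that $\SG$ is a Gram matrix for $p$ with $\|p\| \leq t$.

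The uniform bound is delivered by Proposition~\ref{prop:bounded}. Since $A_j = \iota^*(L_j+L_j^*)\iota$ is a compression with $\|L_j\|=1$, we have $\|A_j\| \leq 2$, and therefore $\|A^w\| \leq 2^\ad$ for every $w \in \la x\ra_\ad$. This yields
\[
\|p_\alpha(A)\| \ \leq\ \sum_{w \in \la x \ra_\ad} \|P_{\alpha,w}\| \, \|A^w\| \ \leq\ 2^\ad \|p_\alpha\| \ \leq\ 2^\ad t,
\]
so Proposition~\ref{prop:bounded} gives $\|\SG_\alpha\| \leq \mu_d\, 2^\ad\, t =: M$. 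The closed $M$-ball in $\cB(\cH^{N(d)})$ is ultraweakly compact by Banach--Alaoglu, and the psd cone is ultraweakly closed; passing to a subnet, $\SG_\alpha \to \SG$ ultraweakly with $\SG \succeq 0$ and $\|\SG\| \leq M$.

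To conclude, I identify $V_d^* \SG V_d$ with $p$ and verify $\|p\| \leq t$. Each entry $(\SG_\alpha)_{v,w}$ converges ultraweakly to $\SG_{v,w}$, and the coefficient of any word $u$ in $V_d^* \SG_\alpha V_d$ is the finite sum $\sum_{v^* w = u} (\SG_\alpha)_{v,w}$, which therefore converges ultraweakly to $\sum_{v^* w = u} \SG_{v,w}$, the corresponding coefficient of $V_d^* \SG V_d$. Hence $V_d^* \SG_\alpha V_d \to V_d^* \SG V_d$ in the product ultraweak topology on $\cA_{2d}$; Hausdorffness combined with $p_\alpha \to p$ forces $p = V_d^* \SG V_d \in \cC_d$. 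Finally, the operator norm on $\cB(\cH)$ is weak-$*$ lower semicontinuous, so $\|P_w\| \leq \liminf_\alpha \|P_{\alpha,w}\|$ for each coefficient, and summing over the finitely many $w \in \la x \ra_\ad$ gives $\|p\| \leq \liminf_\alpha \|p_\alpha\| \leq t$.

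The main obstacle is the uniform Gram-matrix bound; this is exactly the point at which Proposition~\ref{prop:bounded} does the heavy lifting, converting norm control of $p$ at the distinguished Fock-space tuple $A$ into uniform operator-norm control of \emph{every} psd Gram representation. With that bound in hand, the rest is a routine Banach--Alaoglu extraction together with the coefficientwise continuity of the finite-size linear map $\SG \mapsto V_d^* \SG V_d$ and weak-$*$ lower semicontinuity of the norm.
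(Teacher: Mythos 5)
Your proof is correct and follows essentially the same route as the paper: a uniform bound on the Gram matrices $\SG_\alpha$ obtained from Proposition~\ref{prop:bounded} via the estimate $\|p_\alpha(A)\|\lesssim \|p_\alpha\|\le t$, a Banach--Alaoglu extraction of an ultraweak cluster point $\SG\succeq 0$, and lower semicontinuity of the norm to stay in the truncated cone. The only place you diverge is the identification $p=V_d^*\SG V_d$: the paper writes $p_\alpha(A)=V_d(A)^*(\SG_\alpha\otimes I)V_d(A)$, deduces $p(A)=q(A)$ for $q=V_d^*\SG V_d$, and then invokes the extraction formula \eqref{eq:coeff} to conclude $p=q$, whereas you compare coefficients directly, noting that the coefficient of a word $u$ in $V_d^*\SG_\beta V_d$ is the finite sum $\sum_{v^*w=u}(\SG_\beta)_{v,w}$ of block entries, each converging ultraweakly, and then use Hausdorffness of the product ultraweak topology. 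Both steps are valid; your version is a slightly more elementary shortcut at this point (the Fock-space tuple $A$ is still doing the essential work, but only inside Proposition~\ref{prop:bounded}), while the paper's route reuses the extraction machinery it has already set up.
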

\begin{proof}
Fix $t >0.$ Let $(p_{\alpha})_\alpha$ be a net in $\cC_{d,t}$ that converges to $p \in \cA_{2d}.$ Our aim is to show that $p \in \cC_{d,t}.$

For each $\alpha$,
\[
\|p_{\alpha}(A) \| \ \leq \  \|p_{\alpha}\|\, \sum\limits_{w\in \cA_{2d}} \|A^{w}\|\ \leq \  t \,\sum\limits_{w\in \cA_{2d}} \|A^{w}\|.
\]
Hence,
\begin{equation}
    \label{eq:paAfinite}
\sup\limits_{\alpha} \|p_{\alpha} (A) \| \ <\  \infty. 
\end{equation}
Choose $G_\alpha\in \Gamma_{p_\alpha}.$ Combining equation~\eqref{eq:paAfinite} and  Proposition~\ref{prop:bounded} gives, 
\[
   \sup\limits_{\alpha} \|\SG_{\alpha}\| \ <\  \infty.
 \]
Thus, by the Banach--Alaoglu theorem, there is a subnet of operators $(\SG_{\beta})_{\beta}$ 
 that  converges to some operator $\SG \in \cB(\cH)^{N(d) \times N(d)}$ in the ultraweak topology. Since the convergence in the ultraweak operator topology is stronger than convergence in the weak operator topology,
 $\SG_{\beta} \longrightarrow \SG$ in the WOT. Thus, $\SG \succeq 0.$ 

Let $q=V_{d}^{*} \SG V_{d} \in \cC_{d}$ and 
observe,
\[
p_{\alpha}(A) \ =\ V_{d}(A)^{*} (\SG_{\alpha} \otimes I_{\cF^{2}_{\vg}}) V_{d}(A), 
\]
 where $V_{d}(A):\  \cH \otimes \Ftgd \to \cH^{N(d)} \otimes \Ftgd,$ is defined by 
\[
 V_d(A) h \otimes \xi = (h \otimes A^{w} \xi)_{w}.
\]
Since $(\SG_{\beta})_\beta$ 
converges to $\SG$ in the WOT and the tuple $A$ acts on a finite dimensional Hilbert space, 
\[
V_{d}(A)^{*} (\SG_{\beta} \otimes I_{\cF^{2}_{\vg}}) V_{d}(A) \  \longrightarrow \  V_{d}(A)^{*} (\SG \otimes I_{\cF^{2}_{\vg}}) V_{d}(A) \ =\  q (A)
\]
in the WOT. 
Thus, $(p_{\beta} (A))_\beta$ converges to $q(A)$ in the WOT. Therefore, $p(A) = q(A).$ Since, by the extraction formula, equation~\eqref{eq:coeff},
 the coefficients of $p$ and $q$ are determined by evaluation at $A,$ it follows that  $p=q.$ Hence, $p \in \cC_{d}.$ Since the norm is lower semicontinuous in the ultraweak topology,  $\|p\| \leq t. $ Hence, $p \in \cC_{d,t}.$
\end{proof}

It follows from Krein--Smulian Theorem (see, e.g., \cite[Theorem 3.6.2]{D25}) that the cone $\cC_{d}$ is closed in the ultraweak topology. 

\subsubsection{Closedness of $\cC_d$ in the product WOT topology} 

\begin{lemma}\label{lem:approx_1}
Given trace-class operators $S_i\in\cT(\cH)$, $1\leq i\leq m$ and $\epsilon>0$, there exists a finite-rank operator $P$ such that
\[
	\|S_i - PS_iP\|_1<\epsilon
\]
for all $i$. Here $\|\cdot\|_1$ denotes the trace norm.
\end{lemma}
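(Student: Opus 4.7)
The approach is a standard diagonal/SVD-truncation argument, adapted so that a single finite-rank projection works for all $m$ operators at once. First, invoke the singular value decomposition: for each $i$, write
\[
S_i \ =\ \sum_{k\ge 1}\sigma_{i,k}\,f_{i,k}\,\langle\,\cdot\,,e_{i,k}\rangle,
\]
where $(\sigma_{i,k})_k$ are the singular values of $S_i$ (so that $\sum_k\sigma_{i,k}=\|S_i\|_1<\infty$) and $\{e_{i,k}\}_k,\{f_{i,k}\}_k$ are orthonormal systems in $\cH$. Since each series converges absolutely in trace norm, I pick $N\in\N$ large enough that $\sum_{k>N}\sigma_{i,k}<\epsilon/2$ for every $i=1,\dots,m$, and set
\[
S_i^{(N)} \ =\ \sum_{k=1}^{N}\sigma_{i,k}\,f_{i,k}\,\langle\,\cdot\,,e_{i,k}\rangle,
\]
so that $\|S_i-S_i^{(N)}\|_1<\epsilon/2$ for all $i$.

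Next, I take $P$ to be the orthogonal projection onto the finite-dimensional subspace
\[
\cM \ =\ \spann\bigl\{\,e_{i,k},\ f_{i,k}\ :\ 1\le i\le m,\ 1\le k\le N\,\bigr\}\ \subseteq\ \cH.
\]
Then $Pe_{i,k}=e_{i,k}$ and $Pf_{i,k}=f_{i,k}$ for all $1\le i\le m$ and $1\le k\le N$, so by construction $PS_i^{(N)}P=S_i^{(N)}$ for every $i$. Combining with the elementary ideal estimate $\|AXB\|_1\le\|A\|\,\|X\|_1\,\|B\|$ (and $\|P\|\le 1$), the triangle inequality gives
\[
\|S_i-PS_iP\|_1 \ \le\ \|S_i-S_i^{(N)}\|_1 + \|PS_i^{(N)}P-PS_iP\|_1 \ <\ \tfrac{\epsilon}{2}+\tfrac{\epsilon}{2}\ =\ \epsilon,
\]
which is the desired bound.

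I do not anticipate a serious obstacle: the only points that require a little care are choosing \emph{one} $P$ that simultaneously absorbs the left and right singular vectors of every truncation $S_i^{(N)}$ (this is what forces $\cM$ to contain both families $\{e_{i,k}\}$ and $\{f_{i,k}\}$), and exploiting that conjugation by a contraction is contractive on $\cT(\cH)$ so that the third term in the triangle inequality really is controlled by $\|S_i-S_i^{(N)}\|_1$. Both are routine.
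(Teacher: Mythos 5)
Your proof is correct and follows essentially the same route as the paper: produce finite-rank trace-norm approximants (you via SVD truncation, the paper via abstract density of finite-rank operators in $\cT(\cH)$), take one projection $P$ fixing all of them (you project onto the span of the singular vectors, the paper onto $\sum_i\mathrm{ran}(F_i)+\sum_i\mathrm{ran}(F_i^*)$, which is the same thing for SVD truncations), and finish with the triangle inequality and $\|PXP\|_1\le\|X\|_1$. No issues.
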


\begin{proof}
Since finite-rank operators are dense in $\cT(\cH)$ with respect to the trace norm, there are finite-rank operators $F_i$ with 
\[
\|S_i-F_i\|_1<\frac{\epsilon}2.
\]

Let $P$ be the orthogonal projection onto $\sum_i {\rm ran}(F_i)+ \sum_i {\rm ran}(F_i^*)$. Then {$PF_iP=F_i$} for all $i$. Hence
\[
	\begin{split}
\|S_i-PS_iP\|_1 & \leq \|S_i-F_i\|_1 + \|F_i-P S_i P\|_1 \\
 & = \|S_i-F_i\|_1 + \|PF_iP-P S_i P\|_1 \\
 & = \|S_i-F_i\|_1 + \|P (F_i -S_i)P\|_1 \\
 & \leq \frac{\epsilon}2 + \|P\|\, \|F_i-S_i\|_1 \, \|P\| \\
&\leq \frac{\epsilon}2+ \frac{\epsilon}2=\epsilon. \qedhere
	\end{split}
\]
\end{proof}

\begin{lemma}\label{lem:uweak2WOT}
 If  $\varphi:\mathcal A_{2d}\to\mathbb C$ be an ultraweak continuous linear functional that 
separates the cone $\cC_{d}$ from a fixed polynomial $p$ in $\cA_{2d},$ that is, 
\[
\varphi(r^{*}r) \geq 0 \quad \text{for all $r \in \cA_{d}$} \quad  \text{and} \quad \varphi(p+p^{*}) <0,
\]
then there exists a WOT continuous linear functionl $\tilde{\varphi} : \cA_{2d} \to \mathbb{C}$ such that 
\[
\tilde{\varphi}(r^{*}r) \geq 0 \quad \text{for all $r \in \cA_{d}$} \quad  \text{and} \quad \tilde{\varphi}(p+p^{*}) <0.
\]
\end{lemma}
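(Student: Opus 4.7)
The plan is to exploit the concrete form of ultraweak continuous functionals on $\cA_{2d}$. Using the standard duality in which $\cT(\cH)$ is the predual of $\cB(\cH)$ applied coordinatewise, every such $\varphi$ is represented as
\[
\varphi\Bigl(\textstyle\sum_{w\in\la x\ra_{2d}} P_w w\Bigr) \ =\ \sum_{w\in\la x\ra_{2d}}\operatorname{tr}(S_w P_w)
\]
for some trace-class operators $\{S_w\}\subset \cT(\cH)$, and such a functional is WOT continuous precisely when each $S_w$ has finite rank. My strategy is to replace each $S_w$ simultaneously by its compression $PS_wP$ onto a single finite-dimensional subspace, using Lemma~\ref{lem:approx_1} to keep the trace-norm perturbation uniformly small across the finite index set $\la x\ra_{2d}$.

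Concretely, fix $\epsilon>0$ (to be chosen later) and apply Lemma~\ref{lem:approx_1} to the finite family $\{S_w:w\in\la x\ra_{2d}\}$ to produce a finite-rank orthogonal projection $P$ with $\|S_w-PS_wP\|_1<\epsilon$ for every $w\in\la x\ra_{2d}$. Define the candidate separating functional by
\[
\tilde\varphi\Bigl(\textstyle\sum_w P_w w\Bigr) \ :=\ \sum_{w\in\la x\ra_{2d}}\operatorname{tr}\bigl(PS_wP\cdot P_w\bigr).
\]
Since each $PS_wP$ has finite rank, every coordinate functional $P_w\mapsto\operatorname{tr}(PS_wP\cdot P_w)$ is WOT continuous on $\cB(\cH)$, and the finite sum $\tilde\varphi$ is WOT continuous on $\cA_{2d}$.

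The pivotal step is verifying $\tilde\varphi(r^*r)\geq 0$ for every $r\in\cA_d$. Given $r=\sum_{v\in\la x\ra_d}R_v v$, introduce the companion polynomial $\tilde r := \sum_v (R_vP)\,v \in\cA_d$. A direct computation shows that the $w$-coefficient of $\tilde r^*\tilde r$ equals $P(r^*r)_w P$, so
\[
\varphi(\tilde r^*\tilde r) \ =\ \sum_w\operatorname{tr}\bigl(S_w\,P(r^*r)_w P\bigr) \ =\ \sum_w\operatorname{tr}\bigl(PS_wP\,(r^*r)_w\bigr) \ =\ \tilde\varphi(r^*r).
\]
Since $\tilde r\in\cA_d$ and $\varphi\geq 0$ on $\cC_d$ by hypothesis, we obtain $\tilde\varphi(r^*r)\geq 0$.

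For the separation at $p$, write $p+p^*=\sum_w Q_w w$ and set $M:=\sum_w\|Q_w\|$. The trace-norm/operator-norm estimate gives $|\tilde\varphi(p+p^*)-\varphi(p+p^*)|\leq \epsilon M$, so selecting $\epsilon<|\varphi(p+p^*)|/(M+1)$ at the outset ensures $\tilde\varphi(p+p^*)<0$. The principal obstacle is the positivity preservation: a coefficient-wise truncation of $\varphi$ naively only produces approximate positivity on $\cC_d$, which is insufficient. The substitution $R_v\mapsto R_vP$, which repackages the compressed matrix-coefficients into a bona fide polynomial $\tilde r\in\cA_d$, is the device that upgrades the approximation to the exact identity $\tilde\varphi(r^*r)=\varphi(\tilde r^*\tilde r)$ and thus makes the argument go through.
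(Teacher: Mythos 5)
Your proof is correct, and while it shares the paper's overall strategy---represent the ultraweakly continuous $\varphi$ by trace-class operators $S_w$, compress them all by a single finite-rank projection $P$ furnished by Lemma~\ref{lem:approx_1}, note that finite-rank representers yield WOT continuity, and use a trace-norm estimate to keep $\tilde\varphi(p+p^*)<0$---your verification of positivity on squares takes a genuinely different route. The paper assembles the block operator matrix $S=[S_{v^*u}]$, proves it is self-adjoint and positive semidefinite (using Lemma~\ref{lem:psd} together with the positivity of $\varphi$ on squares), compresses it to $S^{(n)}=(I\otimes P_n)\,S\,(I\otimes P_n)$, and reads off $\varphi_n(r^*r)=\operatorname{Tr}(S^{(n)}R^*R)\geq 0$. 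You instead establish the exact identity $\tilde\varphi(r^*r)=\varphi(\tilde r^{\,*}\tilde r)$ for the companion polynomial $\tilde r=\sum_v (R_vP)\,v\in\cA_d$, which is valid because $(\tilde r^{\,*}\tilde r)_w=P\,(r^*r)_w\,P$ and the trace is cyclic; positivity then transfers directly from $\varphi$ without ever invoking the block matrix. Your substitution trick is more elementary and short-circuits the block-matrix positivity argument within this lemma (the paper needs similar machinery again for the GNS construction, so nothing is lost globally by its choice); the paper's route, in exchange, explicitly produces the psd Gram/Hankel-type matrix $S$, the object that reappears in Section~\ref{sec:gns}. Your quantitative choice of $\epsilon$ versus the paper's sequence $P_n$ with error $1/n$ and passage to the limit is only a cosmetic difference.
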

\begin{proof}
Since $\varphi$ is ultraweak continuous, there exist trace class operators $S_{w}$ $(w \in \la x\ra_{2d})$ in $\cB(\cH)$ such that 
\[
\varphi(q) = \sum\limits_{w \in \la x\ra_{2d}} {\rm Tr}\, (S_{w} Q_{w}),
\]
where $q = \sum_{w \in \la x \ra_{2d}} Q_{w} w.$ For $r,r^\prime \in \cA_{d}$, 
\[
\varphi(r^{*}r^\prime) \ =\ \sum\limits_{u,v\in \la x \ra_{d}} {\rm Tr}\, (S_{u^{*}v} R_{u}^{*}R^\prime_{v}),
\]
where  $r = \sum_{u \in \la x \ra_{d}} R_{u} u$
and $r^\prime = \sum_{v \in \la x \ra_{d}} R^\prime_{v} v.$ Denote by $S$ the $N(d) \times N(d)$ block operator matrix whose $(u,v)$ entry is $S_{v^{*}u}.$

\medskip

For any $r \in \cA_{d}$, define the row operator $R : \bigoplus_{u \in \la x \ra_{d}} \cH \to \cH$ by 
\[
 R (\oplus_{u\in\la x\ra_{d}} \zeta_w) \ =\  \sum_{u} R_u \zeta_u.
\]
Thus, 
\begin{align*} 
\varphi(r^{*}r) 
 \ &=\ \sum\limits_{u,v\in \la x \ra_{d}} {\rm Tr}\, (S_{u^{*}v} R_{u}^{*}R_{v}) \\
 \ &=\  \sum_{u,v\in \la x \ra_{d}} {\rm Tr}\, \left([S]_{v,u} [R^{*}R]_{u,v}\right) \\
 \ &=\  \ \ \ {\rm Tr}\, (SR^{*}R).  
\end{align*}
We claim that
\[
{\rm Tr}\, (S T ) \ \geq\  0
\]
for any positive operator $T \in \cB (\bigoplus_{\la x\ra_{d}} \cH).$ It follows from Lemma~\ref{lem:psd} that $T = R^{*}R$ for some $R: \bigoplus_{\la x\ra_{d}} \cH \to \cH.$ Letting $r = \sum R_{u} u,$ where $R_{u}$ is the $u^{\rm th}$ element of the row operator $R$ gives
\[ 
{\rm Tr}\, (ST) \ =\  {\rm Tr}\, (SR^*R) \ =\  \varphi (r^{*}r) \ \geq\  0.
\]

To prove that $S$ is a positive operator, it remains to show that $S$ is self-adjoint.
For $T \succeq 0,$ ${\rm Tr}\, (ST)$ is real. Hence, 
\[
{\rm Tr}\, (S^{*}T) \ =\  {\rm Tr}\, (T S^{*}) \ =\  {\rm Tr}\, ((ST)^{*}) \ =\  \overline{{\rm Tr}\, (ST) } \ =\  {\rm Tr}\, (ST) . 
\]
Since every bounded operator on a Hilbert space is a linear combination of four positive operators,
\[
{\rm Tr}\, (S^{*}T) \ =\  {\rm Tr}\,(ST) \quad \text{for all $T \in \cB(\bigoplus_{\la x\ra_{d}} \cH)$ }.
\]
Hence $S^{*} = S$ and $S\succeq 0.$ 

The finitely many trace class operators $S_w$, $w\in\la x\ra_{2d},$ can be approximated by finite rank operators in the trace norm
as in {Lemma~\ref{lem:approx_1}}. That is, for any $n \in \mathbb{N}$, there exists a finite-rank projection $P_{n}$ of $\cH$ such that 
\[
\| S_w - P_nS_{w}P_n \|_1 < \frac1n ,
\]
for all $w\in\la x\ra_{2d},$
where $\|\cdot\|_1$ denotes the trace norm. 
Letting  $S^{(n)}$ denote the $N(d) \times N(d)$  block operator matrix whose $(u,v)$ entry is $P_nS_{v^{*}u}P_n,$ 
\[
S^{(n)} \ =\ (I_{{N(d)}} \otimes P_{n}) \, S \,  (I_{{N(d)}} \otimes P_{n}).
\]
Whence $S^{(n)}$ is a finite-rank psd operator. 

Define a linear functional $\varphi_{n} : \cA_{2d} \to \mathbb{C}$ by 
\[
	\varphi_n(Q u^*v) = {\rm Tr}\, (P_{n} S_{u^{*}v} P_{n} \ Q)  
\]
for $Q\in \cB(\cH)$ and $u,v\in\la x\ra_d$. 
For  $r= \sum_{u \in \la x \ra_{d}} R_{u} u$ 
and $r^\prime = \sum_{v \in \la x \ra_{d}} R^\prime_{v} v,$ 
\[
\begin{split}
	\varphi_n(r^*r^\prime) & = \sum_{u,v\in\la x\ra_d} \varphi_n(R_u^*R^\prime_v u^*v)= \sum_{u,v\in\la x\ra_d} {\rm Tr}\, (P_{n} S_{u^{*}v} P_{n} \ R_u^*R^\prime_v)  \\
	& = {\rm Tr}\, (P_{n} S P_{n} \ R^*R^\prime)  = {\rm Tr}\, (S^{(n)} \ R^*R^\prime).
	\end{split}
\]
Since $S^{(n)}$ is psd,
\[
\varphi_{n} (r^{*}r) \ =\ {\rm Tr}\, (S^{(n)} R^{*}R)  \geq0 
\]
for all $r \in \cA_{d}.$ 
Further, since $S^{(n)}$ is a finite-rank operator, $\varphi_{n}$ is WOT continuous. 

Since $(P_{n}S_{u^{*}v} P_{n})_n$ converges to $S_{u^{*}v}$ in 
the trace norm, ${\rm Tr}(P_nS_{u^*v}P_n Q)$ converges
to ${\rm Tr}(SQ).$ Hence $\varphi_{n}(p+p^{*})$  converges to $\varphi(p+p^{*}).$
As $\varphi(p+p^*)<0$, there exists a natural number $n$ such that $\varphi_{n}(p+p^{*}) <0.$ The linear functional $\tilde{\varphi} := \varphi_{n}$ has the desired separation properties.  
\end{proof}

We are finally ready to prove Proposition~\ref{prop:closedcone}.

\begin{proof}[Proof of Proposition~\ref{prop:closedcone}]
Suppose $p\in\cA_{2d}$ is not in $\cC_d$. Since $\cC_d$ is ultraweak closed, there is an ultraweak continuous linear functional $\varphi$ on $\cA_{2d}$ that separates $p$ from $\cC_d$. Now apply Lemma~\ref{lem:uweak2WOT} to obtain a WOT continuous separating linear functional $\tilde\varphi$. Thus $p$ is not in the WOT closure of $\cC_d$. 
\end{proof}

%
%
%

\section{GNS Construction}\label{sec:gns}
	
In preparation of the application of the Hahn-Banach-convex separation theorem in the proof of Theorem~\ref{thm:sos} in Section~\ref{sec:proof}, we establish a suitable version of the GNS construction in Proposition~\ref{prop:GNS}. Before doing so, for the reader's convenience, 
we state three  well-known lemmas that will be used in the proof. 


\begin{lemma} \label{lem:clf1}
Let $\cH$ be a Hilbert space. If $f : \cB(\cH) \to \mathbb{C}$ is a WOT continuous  linear functional, then there exist a finite index set $J$ and vectors $h_{j}, k_{j} \in \cH$ such that
\[
f(T) = \sum_{j \in J}  \langle T h_j, k_j \rangle_{\cH}
\quad \text{for all } T \in \cB(\cH).
\]
\end{lemma}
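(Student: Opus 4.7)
The plan is to proceed via the standard characterization of continuous linear functionals on a locally convex space defined by a family of seminorms, specialized to the family generating the WOT on $\cB(\cH)$.

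Recall that the WOT on $\cB(\cH)$ is the locally convex topology generated by the family of seminorms $p_{h,k}(T) = |\langle Th,k\rangle_\cH|$ as $h,k$ range over $\cH$. A standard fact from the theory of topological vector spaces says that a linear functional $f$ is continuous with respect to such a topology if and only if there exist finitely many indices (here finitely many pairs $(h_j, k_j)_{j \in J}$) and a constant $C>0$ such that
\[
|f(T)| \ \leq\ C\, \max_{j \in J} |\langle T h_j, k_j \rangle_\cH|
\qquad \text{for all } T \in \cB(\cH).
\]
The first step is to record this inequality from the given WOT continuity of $f$.

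Next, define the linear map $\Phi : \cB(\cH) \to \mathbb{C}^{|J|}$ by $\Phi(T) = (\langle T h_j, k_j\rangle_\cH)_{j \in J}$. The displayed bound forces $\ker \Phi \subseteq \ker f$, so $f$ factors through $\Phi$; that is, there exists a linear functional $\tilde f$ on $\mathbb{C}^{|J|}$ (defined on $\mathrm{ran}\,\Phi$ and extended arbitrarily) such that $f = \tilde f \circ \Phi$. Writing $\tilde f$ in coordinates produces scalars $c_j \in \mathbb{C}$ with
\[
f(T) \ =\ \sum_{j \in J} c_j\, \langle T h_j, k_j \rangle_\cH
\qquad \text{for all } T \in \cB(\cH).
\]

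Finally, absorb the scalars into the vectors by replacing $h_j$ with $c_j h_j$ (using sesquilinearity of the inner product, $\langle T(c_j h_j), k_j\rangle_\cH = c_j \langle T h_j, k_j\rangle_\cH$), which yields the desired representation. There is no real obstacle here; the only subtlety is invoking the seminorm characterization of continuity for topologies defined by a family of seminorms, a standard result in functional analysis.
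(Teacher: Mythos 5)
Your proof is correct: the seminorm characterization of continuity for the locally convex WOT, the factorization of $f$ through $T\mapsto(\langle Th_j,k_j\rangle)_{j\in J}$, and the absorption of the scalars into the vectors are all valid, and together they give exactly the stated representation. The paper itself offers no argument here, deferring instead to \cite{D25}; your write-up is precisely the standard proof underlying that citation, so there is nothing to reconcile.
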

	
For a proof see \cite[Section 3.1 and Exercise 3.4]{D25}.
	
\begin{lemma} \label{lem:clf2}
If $\varphi: \cA_{\ad} \to \mathbb{C}$ is a continuous linear functional,
then there exist a finite index set $J,$ vectors $h_{j}, k_{j} \in \cH$ and scalars $c_{w} \in \mathbb{C}$ $(w\in \la x\ra_{\ad})$ such that for every $p = \sum P_{w} w \in \cA_{\ad},$
\[
\varphi(p) = \sum\limits_{j \in J} \sum\limits_{w \in \la x \ra_{d}} c_{w} \la P_{w} h_{j}, k_{j} \ra.
\]
\end{lemma}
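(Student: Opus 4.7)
The plan is to reduce the claim to Lemma~\ref{lem:clf1} by exploiting the product structure on $\cA_\ad$. Identify $\cA_\ad$ as a topological vector space with the finite product $\prod_{w \in \la x\ra_\ad} \cB(\cH)$ (each factor carrying the WOT) via $p = \sum_{w} P_w w \mapsto (P_w)_{w}$; by definition of the product WOT topology, this is a homeomorphism of topological vector spaces. A standard duality fact for finite products says that any continuous linear functional on such a product decomposes coordinate-wise:
\[
\varphi(p) \ =\ \sum_{w \in \la x\ra_\ad} \varphi_{w}(P_w),
\]
where $\varphi_w : \cB(\cH) \to \C$ is the restriction of $\varphi$ to the subspace of polynomials supported on the single word $w$. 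Each $\varphi_w$ is then automatically WOT continuous on $\cB(\cH)$, since it factors through the continuous coordinate embedding $\cB(\cH) \hookrightarrow \cA_\ad$.

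Next I would apply Lemma~\ref{lem:clf1} to each individual $\varphi_w$: for every $w \in \la x \ra_\ad$ there exist a finite index set $J_w$ and vectors $h_{w,j}, k_{w,j} \in \cH$ with
\[
\varphi_w(T) \ =\ \sum_{j \in J_w} \la T h_{w,j}, k_{w,j}\ra \quad (T \in \cB(\cH)).
\]
Padding by zero vectors, one may arrange all $J_w$ to be a common finite index set $J$, and substituting back yields
\[
\varphi(p) \ =\ \sum_{j \in J} \sum_{w \in \la x\ra_\ad} \la P_w h_{w,j}, k_{w,j}\ra.
\]
To obtain the representation in the stated form, with vectors $h_j, k_j$ independent of $w$ and $w$-dependence captured by scalars $c_w$, I would fix finite-dimensional subspaces of $\cH$ containing all the $h_{w,j}$ and all the $k_{w,j}$ appearing above (possible since the collection of such vectors is finite), choose bases for these subspaces, and expand each rank-one functional $\la \cdot h_{w,j}, k_{w,j}\ra$ as a scalar combination of functionals of the form $\la \cdot h_j, k_j\ra$ with $h_j, k_j$ drawn from the chosen bases; after reindexing $J$ to run over the resulting pairs of basis vectors, the residual $w$-dependence of the coefficients collects into the scalars $c_w$.

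The main obstacle, such as it is, is the opening structural step: verifying that a continuous linear functional on $\cA_\ad$ equipped with the product WOT really does split as a finite coordinate sum $\sum_w \varphi_w \circ \pi_w$. Once that is in hand, the rest is a direct application of Lemma~\ref{lem:clf1} together with routine bookkeeping and basis expansion; no additional analytic input is needed.
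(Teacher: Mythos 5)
Your opening reduction is sound and is essentially the paper's own argument: the paper's proof consists precisely of identifying $\cA_{\ad}$ with $\cB(\cH)^{N(\ad)}$ carrying the product WOT and then citing Lemma~\ref{lem:clf1}. Your coordinate-wise splitting $\varphi(p)=\sum_{w}\varphi_w(P_w)$ (valid for a finite product of locally convex spaces, each $\varphi_w$ being WOT continuous because the coordinate embedding is continuous), the application of Lemma~\ref{lem:clf1} to each $\varphi_w$, and the padding to a common finite index set are a correct fleshing-out of that one-line proof; contrary to your closing remark, this ``structural step'' is not where the difficulty lies. At that point you have
\[
\varphi(p)\ =\ \sum_{j\in J}\ \sum_{w\in\la x\ra_{\ad}} \la P_w h_{w,j},\,k_{w,j}\ra ,
\]
with vectors that depend on $w$.

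The final basis-expansion step, however, does not work, and cannot be repaired in the literal form asserted. Expanding each $\la\,\cdot\,h_{w,j},k_{w,j}\ra$ in fixed finite bases produces coefficients indexed by $w$ \emph{and} by the basis pair, i.e.\ a representation $\sum_{j}\sum_{w}c_{w,j}\la P_w h_j,k_j\ra$; the $w$-dependence does not ``collect'' into scalars $c_w$ independent of $j$. Indeed, in the stated form one may interchange the (finite) sums to get $\varphi(p)=\sum_w c_w\,\psi(P_w)$ with the single WOT-continuous functional $\psi=\sum_{j\in J}\la\,\cdot\,h_j,k_j\ra$, so every coordinate functional $T\mapsto\varphi(Tw)$ would have to be a scalar multiple of one fixed $\psi$. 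This already fails for $\varphi(p)=\la P_{w_1}e_1,e_1\ra+\la P_{w_2}e_2,e_2\ra$ with $e_1\perp e_2$ and distinct words $w_1\neq w_2$. Thus the attainable (and correct) conclusion is the one you reach after padding, equivalently with scalars $c_{w,j}$ depending on both indices, and this is also all that is needed downstream: in Proposition~\ref{prop:GNS} only the finite-rank operators $S_u=\sum_j |h_{u,j}\rangle\langle k_{u,j}|$ enter, and the block-trace identity and positivity argument go through verbatim with them. The paper's own terse proof establishes no more than this either; your error is in asserting that the extra normalization to $w$-independent vectors can be carried out.
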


\begin{proof}
First identify $\cA_{\ad}$ with $\cB(\cH)^{N(\ad)}$ via 
\[
\sum P_{w} w \mapsto (P_{w})_{w}.
\]
This identification induces the product weak operator topology on $\cB(\cH)^{N(\ad)},$ i.e., the topology generated by the seminorms $\| \cdot \|_{w,h,k}$ given by 
\[
\| P \|_{w,h,k} \ :=\  | \la P_{w} h,k \ra |, \quad w \in \la x \ra_{\ad},\   h,k \in \cH,
\]
where $P = (P_{w})_{w}$ is a tuple of $N(\ad)$ many operators in $\cB(\cH).$ Now the statement follows from Lemma~\ref{lem:clf1}.
\end{proof}

The third lemma presents a few basic properties of the vectorization map 
used in the proof of Proposition~\ref{prop:GNS}. See  Subsection~\ref{ssec:vec}.
Let $HS(\cH,\cE)$ denote the Hilbert-Schmidt operators from the Hilbert space $\cH$
to the Hilbert space $\cE.$  For a fixed orthonormal basis $(e_\delta)_\delta$ of $\cH$,
the \df{vectorization map}, 
$\mathrm{vec}:HS(\cH,\cE)\to \cH\otimes \cE$ is defined, for $T\in HS(\cH,\cE),$ by 
\[
\mathrm{vec}(T)\ =\ \sum_\delta e_\delta\otimes Te_\delta 
\]

\begin{lemma}\label{lem:vec identity}
Let $A,B\in {HS}(\cH,\cE)$ and $P,Q\in \cB(\cH)$, $T\in \cB(\cE)$.
\begin{enumerate}[\rm(1)]
\item \label{i:vec:1}
Independent of the choice of orthonormal basis,
\[
\langle \mathrm{vec}(A),\ \mathrm{vec}(B)\rangle_{\cH\otimes \cE}
\ =\ \mathrm{Tr}(A^{*}B). 
\]
In particular, $\|\mathrm{vec}(A)\|=\|A\|_{\mathrm{HS}}.$
\item  \label{i:vec:2}
The following compatibility with left/right actions holds:
\[
(P\otimes I_\cE)\,\mathrm{vec}(A)=\mathrm{vec}(A P^{*}),\qquad (I_\cH\otimes T)\,\mathrm{vec}(A)=\mathrm{vec}(T A), 
\]
hence, more generally,
\[
(P\otimes T)\,\mathrm{vec}(A)=\mathrm{vec}(T\,A\,P^{*}). 
\]
\item \label{i:vec:3}
(Inner–product identity used in Proposition~\ref{prop:GNS})
\[
\big\langle (P\otimes I_\cE)\,\mathrm{vec}(A),\ (Q\otimes I_\cE)\,\mathrm{vec}(B)\big\rangle
\ =\ \mathrm{Tr}\!\big(PA^{*}B\,Q^{*}\big). 
\]
\end{enumerate}
\end{lemma}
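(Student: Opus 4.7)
The plan is to verify all three parts by direct computation in the orthonormal basis $(e_\delta)_\delta$ of $\cH$, relying only on orthonormality, the defining property of the adjoint, and the trace formula for the Hilbert--Schmidt pairing.

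For part \ref{i:vec:1}, I expand the tensor-product inner product using orthogonality:
\[
\big\langle \mathrm{vec}(A), \mathrm{vec}(B)\big\rangle_{\cH\otimes\cE}
\ =\ \sum_{\delta,\delta'} \langle e_\delta, e_{\delta'}\rangle_\cH \,\langle Ae_\delta, Be_{\delta'}\rangle_\cE
\ =\ \sum_\delta \langle Ae_\delta, Be_\delta\rangle_\cE,
\]
which is precisely the standard expression of the Hilbert--Schmidt pairing $\mathrm{Tr}(A^{*}B)$. Since the right-hand side is manifestly independent of the basis choice, so is the left-hand side, establishing the basis-independence claim; specializing to $B=A$ gives $\|\mathrm{vec}(A)\|=\|A\|_{\mathrm{HS}}$.

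For part \ref{i:vec:2}, the right-action identity is a one-line calculation:
\[
(I_\cH\otimes T)\,\mathrm{vec}(A) \ =\ \sum_\delta e_\delta\otimes TAe_\delta \ =\ \mathrm{vec}(TA).
\]
For the left-action identity, I start from $(P\otimes I_\cE)\mathrm{vec}(A)=\sum_\delta Pe_\delta\otimes Ae_\delta$, expand $Pe_\delta=\sum_\alpha \langle e_\alpha, Pe_\delta\rangle e_\alpha$ in the basis, and exchange the order of summation; recognizing the resulting matrix coefficients as those describing $P^{*}$ yields the asserted $\mathrm{vec}(AP^{*})$. The combined formula then follows by factoring $P\otimes T=(P\otimes I_\cE)(I_\cH\otimes T)$ and applying the two identities in succession to obtain $\mathrm{vec}(TAP^{*})$.

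Part \ref{i:vec:3} is an immediate consequence of \ref{i:vec:1} and \ref{i:vec:2}: the left-action identity of \ref{i:vec:2} rewrites the inner product as $\langle \mathrm{vec}(AP^{*}), \mathrm{vec}(BQ^{*})\rangle$, and then the trace formula of \ref{i:vec:1} collapses it to $\mathrm{Tr}\bigl((AP^{*})^{*}BQ^{*}\bigr)=\mathrm{Tr}(PA^{*}BQ^{*})$.

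None of these steps presents a conceptual obstacle; the only point requiring care is the bookkeeping in the reorganization step of \ref{i:vec:2}, where the conjugation inherent in the basis expansion is precisely what produces the adjoint $P^{*}$ on the algebraic side, rather than the transpose. Once that single swap of indices is in hand, parts \ref{i:vec:1} and \ref{i:vec:3} are essentially automatic.
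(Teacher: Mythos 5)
Your proof follows the paper's argument essentially verbatim: part (1) by direct expansion of the tensor inner product in the orthonormal basis, the left-action identity in part (2) by expanding $Pe_\delta$ in the basis and exchanging the order of summation (the same coefficient bookkeeping the paper performs), the general formula by factoring $P\otimes T=(P\otimes I_\cE)(I_\cH\otimes T)$, and part (3) as an immediate consequence of (1) and (2). Nothing further is needed.
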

	
\begin{proof}
Fix an orthonormal basis $(e_\delta)_\delta$ of $\cH$.

\medskip
		
Using the definition of $\mathrm{vec},$
\[
\langle \mathrm{vec}(A),\mathrm{vec}(B)\rangle
		=\sum_\delta \langle A e_\delta,\ B e_\delta\rangle_\cE
		=\sum_\delta \langle e_\delta,\ A^{*}B e_\delta\rangle_\cH
		=\mathrm{Tr}(A^{*}B). 
\]        
The right-hand side is independent of the choice of orthonormal basis, hence so is the left-hand side,
proving item~\ref{i:vec:1}.
		
\medskip
		
To prove the first identity of item~\ref{i:vec:2}, observe
\[
(P\otimes I_\cE)\mathrm{vec}(A)
		=\sum_\delta P e_\delta \otimes A e_\delta
		=\sum_{\delta,\delta'}\langle e_\delta',Pe_\delta\rangle\, e_\delta'\otimes A e_\delta
		=\sum_\delta e_\delta\otimes A P^{*} e_\delta
		=\mathrm{vec}(A P^{*}). 
\]
The second identity is immediate:
$(I_\cH\otimes T)\sum_\delta e_\delta \otimes A e_\delta =\sum_\delta e_\delta\otimes T(A e_\delta)=\mathrm{vec}(TA)$.
Combine both identities to get $(P\otimes T)\mathrm{vec}(A)=\mathrm{vec}(T A P^{*})$.
		
\medskip
		
Turning to item~\ref{i:vec:3}, using item~\ref{i:vec:2} twice and then item~\ref{i:vec:1},
\[
\begin{split}
\big\langle (P\otimes I)\mathrm{vec}(A),\ (Q\otimes I)\mathrm{vec}(B)\big\rangle
		& =\big\langle \mathrm{vec}(A P^{*}),\ \mathrm{vec}(B Q^{*})\big\rangle \\
		&=\mathrm{Tr}\!\big((A P^{*})^{*} B Q^{*}\big) \\
		&=\mathrm{Tr}\!\big(P A^{*} B Q^{*}\big).  \qedhere
		\end{split}
 \]       
\end{proof}

We have now reached  the main result of this section.

\begin{prop}\label{prop:GNS}
If $\varphi:\mathcal A_{2d+2}\to\mathbb C$ is a continuous linear functional such that
\[
\varphi(p^ *p)\ \ge\ 0  %
\]
for all $p\in \cA_{d+1},$ 
then there exist a finite-dimensional Hilbert space $\cE$, a self-adjoint $g$-tuple $Y=(Y_1,\dots,Y_\vg)$ on $\cE$, and a vector $\gamma\in \cH\otimes \cE$ such that
\[
\varphi(q^ *p)\ =\ \big\langle p(Y)\gamma,\ q(Y)\gamma\big\rangle_{\cH\otimes \cE} \qquad\text{for all }p \in \cA_{d+1},\ q\in\mathcal A_{d}. 
\]
Therefore, for all $p\in\mathcal A_{2d+1}$,
\[
\varphi(p)\ =\ \langle p(Y)\gamma,\ \gamma\rangle. 
\]
\end{prop}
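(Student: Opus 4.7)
The plan is to execute a noncommutative Gelfand--Naimark--Segal (GNS) construction adapted to operator-valued coefficients, with the WOT-continuity of $\varphi$ furnishing the finite-dimensional data throughout. By Lemmas~\ref{lem:clf1}--\ref{lem:clf2}, the functional $\varphi$ is represented using finitely many vectors of $\cH$; let $\cH_0\subseteq\cH$ be the finite-dimensional subspace they span. Then $\varphi$ is insensitive to the compression of all operator coefficients to $\cH_0$, and one may work entirely inside the finite-dimensional algebra $\cA_{d+1}^{\cH_0}:=\cB(\cH_0)\la x\ra_{\le d+1}$.

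Equipping $\cA_{d+1}^{\cH_0}$ with the positive-semidefinite sesquilinear form $\langle p,q\rangle_\varphi:=\varphi(q^*p)$ and quotienting by its nullspace produces a finite-dimensional Hilbert space $\cE$, with quotient map $\pi$ and nested subspaces $\cE_k:=\pi(\cA_k^{\cH_0})$ for $k=0,\dots,d+1$. A Cauchy--Schwarz argument on the GNS form shows that $\pi(p)\mapsto\pi(x_jp)$ descends to a well-defined raising $Z_j:\cE_d\to\cE$, and because $x_j^*=x_j$, both $\langle P_{\cE_d}Z_jv,w\rangle$ and $\langle v,P_{\cE_d}Z_jw\rangle$ collapse to $\varphi(q^*x_jp)$ for $v=\pi(p),\,w=\pi(q)\in\cE_d$, so $A_j:=P_{\cE_d}Z_j|_{\cE_d}$ is self-adjoint on $\cE_d$. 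Writing $Z_j=\bigl(\begin{smallmatrix}A_j\\ B_j\end{smallmatrix}\bigr)$ in the decomposition $\cE=\cE_d\oplus\cE_d^\perp$, the block operator
\[
Y_j\ :=\ \begin{pmatrix}A_j & B_j^*\\ B_j & 0\end{pmatrix}
\]
on $\cE$ is self-adjoint and agrees with $Z_j$ on $\cE_d$, so iterated products $Y^w$ applied to any vector in $\cE_0$ stay inside $\cE_d$ at every intermediate step and coincide with the pure raising $Z^w$ for every $|w|\le d+1$.

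To build $\gamma$, I would invoke the WOT representation $\varphi(Qu)=\mathrm{Tr}(QS_u)$ with finite-rank moment operators $S_u\in\cB(\cH_0)$ and observe that the positivity hypothesis makes the operator-valued Hankel matrix $[S_{v^*w}]$ positive semidefinite on $\bigoplus_{w\in\la x\ra_{\le d+1}}\cH_0$. Factor the degree-zero block as $S_\varnothing=G^*G$ via a map $G:\cH\to\cE_0$ extracted from the GNS embedding $\cB(\cH_0)\cdot 1\hookrightarrow\cE$, and set $\gamma:=\mathrm{vec}(G)\in\cH\otimes\cE$ via Lemma~\ref{lem:vec identity}. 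For $p\in\cA_{d+1}$ and $q\in\cA_d$, expanding $\langle p(Y)\gamma,q(Y)\gamma\rangle$ using parts (2)--(3) of Lemma~\ref{lem:vec identity} together with the key identity $G^*Y^uG=S_{u^*}$ reduces each summand to $\mathrm{Tr}(Q_v^*P_wS_{v^*w})=\varphi(Q_v^*P_w\,v^*w)$, summing to $\varphi(q^*p)$. The subsequent identity $\varphi(p)=\langle p(Y)\gamma,\gamma\rangle$ for $p\in\cA_{2d+1}$ then follows by writing each word of degree at most $2d+1$ as $v^*w$ with $|v|\le d,\,|w|\le d+1$ and specializing to $q=Iv$.

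The principal anticipated obstacle is consistency of $G$: arranging the degree-zero factorization $S_\varnothing=G^*G$ with $G$ valued in $\cE_0$ so that the identity $G^*Y^uG=S_{u^*}$ propagates simultaneously to every $u\in\la x\ra_{\le 2d+1}$. Thanks to the self-adjoint dilation ensuring $Y_j|_{\cE_d}=Z_j$, this propagation is a routine induction on $|u|$ driven by the GNS raising identity and the Hankel relation $S_{(x_jv)^*w}=S_{v^*(x_jw)}$; the care required is that the iterates $Y^{w'}Gh$ remain in the degree-controlled regime for $|w'|\le d+1$, which is exactly what the decomposition $\cE=\cE_d\oplus\cE_d^\perp$ guarantees for polynomials of the allowed degrees.
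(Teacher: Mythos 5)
Your overall skeleton (finite-rank data from WOT continuity, a truncated GNS space, self-adjoint operators acting by degree raising, a vectorized degree-zero embedding as $\gamma$) matches the paper's construction, but there is a genuine gap at the central step: the raising map $Z_j:\cE_d\to\cE$, $\pi(p)\mapsto\pi(x_jp)$, is in general \emph{not} well defined, and Cauchy--Schwarz cannot establish it. If $\pi(p)=0$ with $\deg p\le d$, Cauchy--Schwarz applied to the form $\varphi(q^*\,\cdot\,)$ only controls pairings $\varphi(q^*x_jp)=\langle\pi(p),\pi(x_jq)\rangle$ with $\deg q\le d$ (so that $x_jq$ stays in $\cA_{d+1}$); it shows $\pi(x_jp)\perp\cE_d$, not $\pi(x_jp)=0$, and to get the latter you would need to pair against $x_jp$ itself, i.e.\ against degree $d+1$, which would require positivity of $\varphi$ on squares of degree-$(d+2)$ polynomials --- outside the hypothesis. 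Concretely, take $\vg=1$, $d=1$, a unit vector $h\in\cH$, and $\varphi(\sum_k P_k x^k)=\sum_k m_k\langle P_kh,h\rangle$ with $m_0=m_4=1$, $m_1=m_2=m_3=0$: the Hankel matrix $[m_{j+k}]_{j,k\le 2}$ is psd, so $\varphi(p^*p)\ge0$ for all $p\in\cA_{2}$, yet $\pi(x)=0$ while $\pi(x^2)\neq0$. Hence the block $B_j=P_{\cE_d^\perp}Z_j$ does not exist, your self-adjoint dilation $Y_j$ cannot be formed, and the property it was designed to deliver (that $Y^w$ agrees with the pure raising $Z^w$ on $\cE_0$ for $|w|\le d+1$) is unavailable.

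The paper avoids exactly this trap: it never uses the full raising into degree $d+1$ as an operator, only its compression to the degree-$\le d$ subspace, $Y_j=P_{\cE}L_{x_j}$, whose well-definedness needs only pairings against degree-$\le d$ vectors --- precisely where the Cauchy--Schwarz/Hankel argument is valid --- and then recovers $Y^w(P_\cE\Phi(\varnothing))=P_\cE\Phi(w)$ for $|w|\le d+1$ by induction, because all intermediate words have length $\le d$ so the projections act as the identity until the final step. Your argument can be repaired by dropping the dilation and working with $A_j=P_{\cE_d}(\text{raising})$ alone, but then the verification of $\varphi(q^*p)=\langle p(Y)\gamma,q(Y)\gamma\rangle$ must track the final projection as the paper does, rather than invoking $Y^w=Z^w$. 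A secondary soft spot: your $G:\cH\to\cE_0$ is ``extracted from the GNS embedding $\cB(\cH_0)\cdot1\hookrightarrow\cE$,'' but an operator-coefficient GNS space gives a map from $\cB(\cH_0)$, not from vectors of $\cH$, so producing $G$ with $\langle Z^wGh,Z^vGh'\rangle=\langle S_{v^*w}h,h'\rangle$ requires an argument you have not supplied; the paper sidesteps this by building the pre-Hilbert space from $\bigoplus_{w}\cH$ via the maps $\Phi(w)$, so that $P_\cE\Phi(\varnothing)$ plays the role of $G$ automatically.
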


The proof proceeds in five steps.

\subsection{The positive block matrix \texorpdfstring{$S$}{S}}
We construct a finite-rank psd block operator matrix $S$ that determines the linear functional $\varphi.$

 By Lemma~\ref{lem:clf2}, there exist a finite index set $J$, vectors $h_j,k_j\in\cH$, and scalars $c_{u}\in\mathbb C$ ($u\in \II{2d+2}$) such that for every $p=\sum_{u\in \II{2d+2}}P_u\,u$,
\[
\varphi(p)\ =\ \sum_{j\in J}\ \sum_{u\in \II{2d+2}} c_{u}\,\langle P_u h_j,k_j\rangle. 
\]
For $v,w\in \II{d+1}$ define the finite rank operator
\[	
S_{v^ *w}\ :=\ c_{v^ *w}\sum_{j\in J} |h_j\rangle\!\langle k_j|.
\]
Denote by $S$ the $N(d+1) \times N(d+1)$ block operator matrix whose $(v,w)$ entry is $S_{w^{*}v}.$ For $p=\sum_{w\in \la x\ra_{d+1}}P_w\,w$ and $q=\sum_{v\in \la x\ra_{d+1}}Q_v\,v$, a direct computation
gives the block–trace identity
\begin{align}
\varphi(q^*p)
\ =&\  \sum_{j\in J}\sum_{v,w\in \la x \ra_{d+1}} c_{v^*w} \langle Q_v^* P_w h_j, k_j\rangle \nonumber \\
\ =&\  \sum_{v,w\in \la x\ra_{d+1}} c_{v^*w} \sum_{j\in J}  \langle Q_v^* P_w h_j, k_j\rangle \nonumber \\
\ =&\  \sum_{v,w\in \la x \ra_{d+1}} \operatorname{Tr}_\cH ( S_{v^* w}\, Q_v^* P_w). \label{eq:block-trace}
\end{align}
Define the row operator $P : \bigoplus_{\la x\ra_{d+1}}\cH \to \cH$ by 
\[
 P (\oplus_{w\in\la x\ra_{d+1}} \zeta_w) \ =\  \sum_{w} P_w \zeta_w.
\]
The adjoint $P^{*}: \cH \to \cH^{N(d+1)}$ of $P$ is given by %
\[
 P^* \eta  \ =\  \oplus_{w\in \la x\ra_{d+1}} P_w^* \eta.
\]
Now
\begin{align*} 
\varphi(p^{*}p) 
 \ & \ = \sum_{v,w\in \la x\ra_{d+1}} \operatorname{Tr}_\cH ( S_{v^* w}\, P_v^* P_w)   
\\[3pt] \ & \  = \sum_{v,w\in \la x \ra_{d+1}} \operatorname{Tr}_\cH \left([S]_{w,v} [P^{*}P]_{v,w}\right) 
\\[3pt] \ & \  =  \ \ \operatorname{Tr} (SP^{*}P).  
\end{align*}

We claim that
\begin{equation} \label{eq:trace3}
\operatorname{Tr} (S T ) \ \geq\  0 
\end{equation}
for any positive operator $T \in \cB (\cH^{N(d+1)}).$ It follows from Lemma~\ref{lem:psd} that $T = P^{*}P$ (when $\cH$ is infinite-dimensional) for some $P: \cH^{N(d+1)} \to \cH.$ Letting $p = \sum P_{w} w,$ where $P_{w}$ is the $w^{\rm th}$ element of the row operator $P$ gives,
\[ 
\operatorname{Tr} (ST) \ =\  \varphi (p^{*}p) \ \geq\  0.
\]
In case of finite dimensional $\cH,$ a similar argument gives \eqref{eq:trace3}. The only difference is that instead of one polynomial, 
$N(d+1)$ many polynomials as per Lemma~\ref{lem:psd} are required. In either case it follows that $\operatorname{Tr} (ST)\ge 0$ for all
positive operators $T.$

The positivity of $S$ can be concluded exactly as in the proof of Lemma~\ref{lem:uweak2WOT}. Because $J$ and $\la x\ra_{d+1}$ are finite, $S$ is finite rank.
		
\medskip
		
\subsection{An Auxiliary Hilbert space \texorpdfstring{$\cM$}{M}} We  construct a finite-dimensional Hilbert space $\cM$ from the psd sesquilinear form induced by the psd operator $S.$ The Hilbert space $\cE$ is constructed as a subspace of $\cM$ in  Subsection~\ref{ssec:E}.
		
Consider the vector space 
\[
V =\ \bigoplus_{w\in \la x \ra_{d+1}} \cH. 
\]
Equip $V$ with the sesquilinear form 
\begin{align*}	
\left\langle (\xi_w)_{w},\,(\eta_v)_{v}\right\rangle_{V}\  := &\ \la S (\xi_{w})_{w}, (\eta_{v})_{v} \ra_{\cH^{N(d+1)}} 
\ =\  \sum\limits_{v \in \la x\ra_{d+1}} \left \la \sum\limits_{w\in \la x \ra_{d+1}} [S]_{v,w} \xi_{w}, \eta_{v}\right\ra_{\cH} \\
\ =&\  \sum\limits_{v,w \in \la x \ra_{d+1}} \la S_{w^{*}v} \xi _{w}, \eta_{v}\ra_{\cH} 
\ =\  \sum_{v,w\in \la x \ra_{d+1}}\!\langle \xi_w,\ S_{v^ *w}\,\eta_v\rangle_\cH . 
\end{align*}
This form is psd by the positivity of $S.$ Let 
\[
\cN\ =\ \{z\in V:\langle z,z \rangle_{V}=0\}
\]
denote its subspace of null vectors and set
\[
\cM \ =\  V/\cN. \]
Since $S$ has finite rank, $\cM$ is finite dimensional.

\subsection{The coordinate maps \texorpdfstring{$\Phi(w)$}{P}} The coordinate maps are important for defining the Hilbert space $\cE$, the operator tuple $Y$, and the representing vector $\gamma.$ 

For each $w\in \la x\ra_{d+1}$ define $\Phi(w):\cH\to \cM$ by 
\[
\Phi(w)\xi\ =\ \big[(\delta_{u,w}\xi)_{u\in \la x\ra_{d+1}}\,\big], 
\]
where $\delta_{u,w}$ denotes the Kronecker delta. Thus, for all $v,w\in \la x\ra_{d+1}$ and $\xi,\eta\in \cH$,
\begin{equation}\label{eq:Phi-kernel}
\big\langle \Phi(w)\xi,\ \Phi(v)\eta\big\rangle_\cM
\ =\ \langle \xi,\ S_{v^ *w}\,\eta\rangle_\cH .
\end{equation}
		
\medskip
		
\subsection{The Hilbert space \texorpdfstring{$\cE$}{E} and the operator tuple \texorpdfstring{$Y$}{Y}}\label{ssec:E}
We define a Hilbert space $\cE$ and a self-adjoint tuple of operators $Y$ on $\cE.$
		
Let
\[ \cE\ :=\ {\rm span}\{\Phi(w)\xi:\ w\in \II{d},\ \xi\in \cH \}\ \subset \cM .
\]

For $i=1,\dots,g$, define $L_{x_{i}}: \cE\to \cM$ by
\begin{equation}\label{eq:Li}
L_{x_{i}}\big(\Phi(w)\xi\big)\ =\ \Phi(x_i w)\,\xi, \qquad(w\in \II{d},\ \xi\in \cH).
\end{equation}

\subsubsection{The \texorpdfstring{$L_{x_{j}}$}{} are well-defined}
If $z=\sum_{w\in \II{d}}\Phi(w)\xi_w\in \cE$ represents $0$ in $\cM$ (i.e.,\ $z\in\cN$), then for any $v\in \la x\ra_{d+1}$ and $\eta\in \cH$, using \eqref{eq:Phi-kernel} and $(x_iv)^ *=v^ *x_i$,
\begin{align*}
\big\langle L_{x_{i}} z,\ \Phi(v)\eta\big\rangle_\cM \ =&\  \sum_{w \in \la x\ra_{d}}\langle \xi_w,\ S_{v^ * x_i w}\eta\rangle_\cH 
\ =\   \sum_{w \in \la x \ra_{d}}\langle \xi_w,\ S_{(x_iv)^ * w}\eta\rangle_\cH \\
\ =&\  \big\langle z,\ \Phi(x_iv)\eta\big\rangle_\cM 
\ =\  0. 
\end{align*}
Since the vectors $\Phi(v)\eta$ span $\cM$, we have $L_{x_{i}} z\in \cN,$ proving  that $L_{x_{i}}$ is well-defined. 

Let $P_{\cE}$ be the orthogonal projection of $\cM$ onto $\cE.$ For $i=1,\dots, g,$ define $Y_{i}: \cE \to \cE$ by 
\[
Y_{i} = P_{\cE} L_{x_{i}}.
\]
\subsubsection{ The \texorpdfstring{$Y_j$}{Y} are self-adjoint} For $v,w\in \II{d}$ and $\xi,\eta\in \cH$,
\begin{align*}
\big\langle Y_{i}\Phi(w)\xi,\ \Phi(v)\eta\big\rangle_\cE 
\ =&\ \big\langle L_{x_{i}}\Phi(w)\xi,\ P_{\cE}\Phi(v)\eta\big\rangle_\cM \\
\ =&\ \big\langle L_{x_{i}}\Phi(w)\xi,\ \Phi(v)\eta\big\rangle_\cM \\
\ =&\ \langle \xi,\ S_{v^ *x_i w}\eta\rangle_\cH \\
\ =&\ \langle \xi,\ S_{(x_iv)^ * w}\eta\rangle_\cH \\
\ =&\ \big\langle \Phi(w)\xi,\ L_{x_{i}}\Phi(v)\eta\big\rangle_\cM \\
\ =&\  \big\langle Y_{i}^{*} \Phi(w)\xi,\ \Phi(v)\eta\big\rangle_\cE.
\end{align*}
		
\medskip
		
\subsection{The representing vector and evaluation}\label{ssec:vec} We construct the representing vector $\gamma$ and complete the proof. 
It is here that the vectorization map of Lemma~\ref{lem:vec identity} appears.

Define
\[
\gamma\ :=\ \mathrm{vec}\big(P_{\cE}\Phi(\varnothing)\big)\ \in\ \cH\otimes \cE, 
\]
where, as usual, $\varnothing$ is the empty word.
Consider a word $w=x_{i_1}\cdots x_{i_k}$ with $k\le d+1.$ We claim that 
\[
Y^{w}P_{\cE}\Phi(\varnothing)= P_{\cE}\Phi(w). 
\]
Indeed, for $\xi \in \cH$,
a word $|w|\le d$, and $1\le j\le \vg,$ 
\[ 
 Y_j P_{\cE} \Phi(w)\xi = P_{\cE} L_{x_j} \Phi(w)\xi = P_{\cE} \Phi(x_jw) \xi,
\]
since $\Phi(w)\xi\in \cE.$ Hence,
a finite induction argument gives,
\begin{align*} 
Y^{w} P_{\cE}\Phi(\varnothing) \xi 
\ =&\ Y^{w} \Phi(\varnothing) \xi \\
\ =&\  Y_{i_{1}} \dots Y_{i_{k}} \Phi(\varnothing) \xi \\
\ =&\  Y_{i_{1}} \dots Y_{i_{k-1}} P_{\cE} L_{x_{k}} \Phi(\varnothing) \xi\\
\ =&\  Y_{i_{1}} \dots Y_{i_{k-1}} \Phi(x_{k}) \xi\\
\ =&\  Y_{i_{1}} \Phi(x_{i_{2}} \cdots x_{i_{k}})\xi \\
\ =&\ P_{\cE} \Phi(w) \xi.
\end{align*}
		
Thus for $w \in \la x\ra_{d+1}$,
\begin{align} \label{eq:shift-gamma}
(I_\cH\otimes Y^{w})\,\gamma 
\ =&\  (I_\cH\otimes Y^{w}) \left( \sum\limits_{\delta} e_{\delta} \otimes P_{\cE} \Phi(\varnothing) e_{\delta}\right)  \nonumber \\
\ =&\  \sum\limits_{\delta} e_{\delta} \otimes P_{\cE}\Phi(w) e_{\delta}   \nonumber  \\
\ =&\  \mathrm{vec}\big(P_{\cE}\Phi(w)\big).
\end{align}

Let $p=\sum_{w\in \II{d+1}}P_w\,w$ and $q=\sum_{v\in \II{d}}Q_v\,v$. Using \eqref{eq:shift-gamma} and the standard vectorization identity (see Lemma~\ref{lem:vec identity})
\[
\big\langle (T\otimes I)\mathrm{vec}(A),\ (R\otimes I)\mathrm{vec}(B)\big\rangle =\operatorname{Tr}\!\big(TA^{*} B R^{*}\big), 
\]
we obtain
\begin{align*}
\big\langle p(Y)\gamma,\ q(Y)\gamma\big\rangle 
\ =&\  \sum_{w\in \II{d+1}} \sum\limits_{v \in \la x\ra_{d}}
\big\langle (P_w\otimes I)\,\mathrm{vec}(P_{\cE}\Phi(w)),\ (Q_v\otimes I)\,\mathrm{vec}(P_{\cE}\Phi(v))\big\rangle \\
\ =&\  \sum_{w\in \II{d+1}} \sum\limits_{v \in \la x\ra_{d}} \operatorname{Tr} \big(P_{w}  \Phi(w)^{*} P_{\cE}\Phi(v) Q_{v}^{*}\big) \\
\ =&\  \sum_{w\in \II{d+1}} \sum\limits_{v \in \la x\ra_{d}} \sum\limits_{\delta} \la  P_{\cE} \Phi(v) Q_{v}^{*}e_{\delta}, \Phi(w) P_{w}^{*} e_{\delta} \ra  \\
\ =&\  \sum_{w\in \II{d+1}} \sum\limits_{v \in \la x\ra_{d}} \sum\limits_{\delta} \la   \Phi(v) Q_{v}^{*}e_{\delta}, \Phi(w) P_{w}^{*} e_{\delta} \ra  \\
\ =&\  \sum_{w\in \II{d+1}} \sum\limits_{v \in \la x\ra_{d}} \sum\limits_{\delta} \la   Q_{v}^{*}e_{\delta}, S_{w^{*}v} P_{w}^{*} e_{\delta} \ra  \hspace{5mm} \text{(using \eqref{eq:Phi-kernel})}\\
\ =&\  \sum_{w\in \II{d+1}} \sum\limits_{v \in \la x\ra_{d}} \sum\limits_{\delta} \la  P_{w} S_{v^{*}w} Q_{v}^{*}e_{\delta}, e_{\delta} \ra  \\
\ =&\  \sum_{w\in \II{d+1}} \sum\limits_{v \in \la x\ra_{d}} \operatorname{Tr} \big(P_{w} S_{v^ *w}\,Q_{v}^{*}\big) \\
\ =&\ \sum_{w\in \II{d+1}} \sum\limits_{v \in \la x\ra_{d}} \operatorname{Tr} \big(S_{v^{*}w}Q_{v}^{*}P_{w} \big).
\end{align*}
By \eqref{eq:block-trace}, the right-hand side equals $\varphi(q^ *p)$, proving
\[
\varphi(q^ *p)\ =\ \langle p(Y)\gamma,\ q(Y)\gamma\rangle, 
\]
for all $p \in \cA_{d+1},\ q\in\mathcal A_{d}.$
\qed

\section{Proof of Theorem~\ref{thm:sos}}\label{sec:proof}

Suppose $f \in \cA_{2d}$ satisfies 
item~\ref{i:sos:ii} of Theorem~\ref{thm:sos}. That is, 
$f(X) \succeq 0$ for all ${\vg}$-tuples of self-adjoint matrices $X = (X_{1}, \dots, X_{\vg})$.	Our aim is to show that $f \in \cC_{d}.$ We will prove this statement via the contrapositive.  Accordingly, assume that $f \notin \cC_{d}.$ Since the top degree terms cannot cancel,  $f \notin \cC_{d+1}.$

Since $\cA_{2d+2}$ is a locally convex topological vector space and 
$\cC_{d+1}$ is closed by Proposition~\ref{prop:closedcone},  the Hahn--Banach separation theorem (see, e.g., \cite[Corollary 3.3.9]{D25}) implies that there exist a WOT continuous linear functional $\varphi : \cA_{2d+2} \to \mathbb{C},$ and real numbers $\gamma_{1}, \gamma_{2}$ such that 
\[
   {\real} (\varphi (f)) \ <\  \gamma_{1} \ <\ \gamma_{2} \ <\ {\real}(\varphi(p)) \quad \text{for all } p \in \cC_{d+1}.
\]
By \eqref{eq:coeff}, it follows that $f = f^{*}$ as $Z(f) = Z (f^{*}).$ Also $p = p^{*}$ for all $p \in \cC_{d+1}.$ Since $\cC_{d+1}$ is a cone, 
\[
   \varphi(f) \ =\  {\real}(\varphi(f)) \ <\ 0 \ \leq\ {\real}(\varphi(p)) \ =\  \varphi(p) \quad \text{for all } p \in \cC_{d+1}.
\]
Now apply Proposition~\ref{prop:GNS}. 
There exist a finite-dimensional Hilbert space $\cE$, a self-adjoint ${\vg}$-tuple $Y=(Y_1,\dots,Y_{\vg})$ on $\cE$, and a vector $\gamma\in \cH\otimes \cE$ such that
\[	\varphi(p)\ =\ \big\langle p(Y)\gamma,\ \gamma\big\rangle_{\cH\otimes \cE} \qquad\text{for all }p\in\cA_{2d}. 
\]
In particular,
\[
	0 > \varphi(f) = \langle f(Y)\gamma,\ \gamma \rangle .%
\]
Thus $f(Y)\not\succeq 0.$ 
\qed

\section{Proof of Theorem~\ref{thm:usos}}\label{sec:uproof}

The proof of Theorem~\ref{thm:usos} roughly follows the outline used in the proof of Theorem~\ref{thm:sos} in Section~\ref{sec:proof} above. We thus only explain the differences and adaptations needed to establish Theorem~\ref{thm:usos}.

Most of the notation we introduced for $\cA$ naturally carries over to $\ccA$. We will use $\ccC$ to denote the sum of squares in $\ccA$, and a straightforward adaptation of the results of Section~\ref{sec:sos} yields an analog of Corollary~\ref{cor:soscone} 
(and Remark~\ref{rem:soscone})
describing the cone 
$\ccC_d$ in $\ccA_{2d}$. 
Of course,
$V_d$ is now the Veronese column vector for (reduced) words $u\in\freeg$ with $|u|\leq d$.

\subsection{Creation operators and a tuple of unitaries}
The biggest change is in the construction of 
suitable operators out of the creation operators. That is,
we need to replace the self-adjoint $A_j$ of \eqref{eq:symcreate} with  unitary operators $U_j$. 
Let $\mathbf{F}_{d}$ denote the Hilbert space obtained as
 the span of (the orthonormal set) $(\freeg)_{d}$ of words of length at most $d.$  
 For notational purposes, let $\{x,x^{-1}\}$ denote the set   $\{x_1,\dots,x_{\vg},x_1^{-1},\dots,x_{\vg}^{-1}\}.$
Fix $y\in \{x,x^{-1}\}$ and let
\[
 \mathbf{M}_y \ =\  {\rm span} \big(  (\freeg)_{{d}-1}  \cup y  (\freeg)_{d-1} \big) \ \subseteq \ \mathbf{F}_{d}
\]
 Since if $w\in \mathbf{M}_{y^{-1}},$ then $yw\in \mathbf{M}_y,$ we obtain a linear map
\[
 L_y: \mathbf{M}_{y^{-1}} \to \mathbf{M}_{y},
 \qquad L_yw=yw.
\]   
 Given a reduced word $u\in (\freeg)_{d-1},$ 
 {$y^{-1}u \in y^{-1}(\freeg)_{d-1}$ (or $y^{-1}u\in (\freeg)_{d-2}$)
 and $L_y y^{-1} u =u.$ Thus $u$ is in the range of $L_y.$ Similarly, $ y u\in y (\freeg)_{d-1},$
 is in the range of $L_y$ since $u\in \mathbf{M}_{y^{-1}}.$ Hence $L_y$ is onto.} Since $\mathbf{M}_{y^{-1}}$ and $\mathbf{M}_y$ have the same dimension (by symmetry),
 $L_y$ is bijective.   {Now let $w,v\in (\freeg)_{d-1}  \cup y  (\freeg)_{d-1}$
 be given. Since $L_y$ is bijective 
   $L_y w=L_y v$
 if and only if $w=v$ and thus,
 \[
  \langle L_y w, L_y v \rangle \ =\   \langle w,v\rangle,
 \]
  for all $w,v\in (\freeg)_{d-1}  \cup y  (\freeg)_{d-1}.$  Since $(\freeg)_{d-1}  \cup y  (\freeg)_{d-1}$ is an
 orthonormal basis for $\mathbf{M}_{y^{-1}},$ it follows that
  $L_y$ is a unitary map.}  Because $\mathbf{M}_{y^{-1}}$ and $\mathbf{M}_y$ have the same dimension,
  they have the same codimension in $\mathbf{F}_{d}$ and hence $L_y$ extends to a unitary 
  operator on $\mathbf{F}_{d}$, called $U_y$.  Note that if $w\in (\freeg)_{d-1}$ and $z\in \{x,x^{-1}\},$
   then $L_z L_y  w = zy w.$ In particular,
  if $z=y^{-1},$ then $L_z L_y w=w.$   Finally, if $w\in (\freeg)_{d},$ then 
 \[
  L^w \varnothing = w.
 \]
  To prove  this claim, given $w\in (\freeg)_d,$ 
  write $w=y u$ where $u\in (\freeg)_{d-1}$ and $y\in \{x,x^{-1}\}.$
   Thus $L^w \varnothing = L_y L^u \varnothing = L_y u =yu.$

Next, 
the analog of Proposition~\ref{prop:bounded} in the unitary case is the following: If $p\in \ccC_d,$ then $\Gamma_p$ is norm bounded. More precisely, with the $U_y$
just constructed and 
 \[
  U=(U_{x_1}, \dots, U_{x_\vg}, U_{x_1^{-1}}, \dots,  U_{x_\vg^{-1}}), %
 \]
there exists a $\tau_{d}$ such that 
if $S\in \Gamma_p,$ then 
\[
 \|S\| \le \tau_{d}\; \|p(U)\|.
\]
To prove this claim, observe, for a word $w\in (\freeg)_d$ and vectors $\zeta,\eta\in \cH,$ 
\[
\langle p(U)  \zeta\otimes\varnothing, \eta\otimes w\rangle \ =\  \langle P_w \zeta,\eta\rangle.
\]
Hence, $\|P_w\|  \le \|p(U)\|.$  Now follow the rest of the proof of Proposition~\ref{prop:bounded} with the conclusion
$\|S_{v,w}\| \le N_{\rm red}(d)\; \|p(U)\|.$   

After the topology on $\ccA_{2d}$ has been defined via WOT-convergence of the coefficients as in Section~\ref{sec:top},
the proof of Proposition~\ref{prop:closedcone} translates essentially verbatim to show 
the closedness of the cone $\ccC_d$.

\subsection{A modified GNS construction}\label{ssec:uGNS}
The only other point that needs attention is the proof of a suitable GNS construction as in Proposition~\ref{prop:GNS}.
Since we cannot rely on non-cancellation of the highest order terms, we start with a continuous $\varphi:\ccA_{2d}\to\C$ that is positive on $\ccC_d;$ that is,
\[
\varphi(p^*p) \ \ge\  0
\]
for all $p\in\ccA_d.$  We go about obtaining $S$ and $\cE$ as in the self-adjoint case - with the obvious
adjustments.

For $y\in \{x,x^{-1}\},$ we now let 
\[
 \cM_y \ =\ {\rm span} \{\Phi(w)\xi : w\in \left (
(\freeg)_{d-1} \cup y (\freeg)_{d-1} \right ),  \, \xi\in \cH\} \ \subseteq\  \cE.
\]
 It is evident that if $w\in \cM_{y^{-1}},$ then $yw\in \cM_y.$ Hence we obtain a linear map
 $L_y :\cM_{y^{-1}} \to \cM_y$ by 
\[
 L_y \Phi(w)\xi \ =\  \Phi(y w)\xi.
\]

 That $L_y$ is well-defined works just as with the self-adjoint case.  That 
 $L_y$ is isometric  is a consequence of 
 \[
   S_{v,w} \ =\  S_{v^{-1}w} \ =\  S_{v^{-1} y^{-1} y w} \ =\  S_{(yv)^{-1} (yw)},
 \] 
  for the relevant $v,w$ and where our involution ${}^\ast$ satisfies $y^*=y^{-1}.$ 
  To see that $L_y$ is onto, observe if  $w\in \cM_{y},$ then $L_y L_{y^{-1}} w=w.$
  Hence $L_y$ is onto (and $L_{y^{-1}}$ is its inverse).
  Thus $L_y$ is unitary.

  Since $\cE$ is finite dimensional and  $L_y$ is bijective between them, the subspaces
   $\cM_{y^{-1}}$  and $\cM_y$ have the same codimension and thus $L_y$ extends to 
   a unitary map $U_y:\cE\to\cE.$  The novel ingredients  now in place, 
   following the, by now, beaten path of the cone separation GNS argument yields 
Theorem~\ref{thm:usos}. \qed

\section{Concluding remark and a problem}\label{sec:new}

The reader will have no difficulty verifying that both Theorem~\ref{thm:sos} and Theorem~\ref{thm:usos} remain true if we replace the algebra of coefficients $\cB(\cH)$ with a von Neumann algebra. The proofs carry over to this setting in a straightforward way.
 On the other hand, we do not know if the results still hold if the coefficients are from a $C^*$-algebra.

\section*{Declarations}

\subsection*{Competing interests} The authors have no relevant financial or non-financial interests to disclose.

\subsection*{Data availability} This article is theoretical in nature; no datasets were generated or analyzed. All results and proofs are contained within the article.

\makeatletter
\saved@setaddresses                    %
\makeatother

\newpage \printindex


\begin{thebibliography}{99}

\bibitem[AM15]{AM15}
Jim Agler, John E. McCarthy, 
{\it Global holomorphic functions in several noncommuting variables},
Can. J. Math. 67 (2015) 241--285. 


\bibitem[AP95]{AP95}
	Alvaro Arias, Gelu Popescu,
{\it Factorization and reflexivity of Fock spaces},
Integral Equations Oper. Theory 23 (1995) 268--286.


\bibitem[BT07]{BT07}
Mihály Bakonyi, Dan Timotin,
{\it Extensions of positive definite functions on free groups},
J. Funct. Anal. 246 (2007) 31--49.


\bibitem[BMV16]{BMV16}
Joseph A. Ball, Gregory Marx, Victor Vinnikov, 
{\it Noncommutative reproducing kernel Hilbert spaces},
J. Funct. Anal. 271 (2016) 1844--1920. 


\bibitem[BGK79]{BGK79}
Harm Bart, Israel Gohberg, Marinus A. Kaashoek,
\textit{Minimal factorization of matrix and operator functions},
Operator Theory: Advances and Applications, Vol. 1. Basel, Boston, Stuttgart: Birkhäuser Verlag. V, 227 p. (1979). 

\bibitem[BGKR10]{BGKR10}
Harm Bart, Israel Gohberg, Marinus A. Kaashoek, André C. M. Ran, 
{\it A state space approach to canonical factorization with applications},
Operator Theory: Advances and Applications 200. Linear Operators and Linear Systems. Basel: Birkhäuser. xii, 423 p. (2010). 


\bibitem[BCR98]{BCR98}
Jacek Bochnak, Michel Coste, Marie-Françoise Roy, 
{\it Real algebraic geometry},
Ergebnisse der Mathematik und ihrer Grenzgebiete. 3. Folge. 36. Springer, ix, 430 p. (1998). 


\bibitem[BCPSW14]{brunner}
Nicolas Brunner, Daniel Cavalcanti, Stefano Pironio, Valerio Scarani, Stephanie Wehner,
{\it Bell nonlocality}, 
Rev. Modern Phys. 86 (2014), 419--478.

\bibitem[Dav25]{D25} Kenneth R. Davidson, {\it Functional Analysis and Operator Algebras}, 
CMS/CAIMS Books in Mathematics 13. Springer, xiv, 797 p. (2025). 


\bibitem[dOHMP09]{dOHMP09}
Mauricio de Oliviera, J. William Helton, Scott McCullough, Mihai Putinar, 
{\it Engineering Systems and
Free Semi-Algebraic Geometry}, in: Emerging Applications of Algebraic Geometry, 17--62, IMA Vol. Math. Appl. 149, Springer, 2009.


\bibitem[DR10]{DR10}
Michael Dritschel, James Rovnyak,
{\it The operator Fejér-Riesz theorem}, in:
Operator Theory: Advances and Applications 207 (2010) 223--254.


\bibitem[DW05]{DW05}
Michael A. Dritschel, Hugo J. Woerdeman, 
{\it Outer factorizations in one and several variables},
Trans. Am. Math. Soc. 357  (2005) 4661--4679. 

\bibitem[Fra84]{Fra84}
Arthur E. Frazho, 
{\it Complements to models for noncommuting operators},
J. Funct. Anal. 59 (1984) 445--461. 

\bibitem[GW05]{GW05}
Jeffrey S. Geronimo, Hugo J. Woerdeman,
{\it Positive extension, Fejér-Riesz factorization and autoregressive filters in two variables},
Ann. Math. (2) 160 (2005) 839--906.

\bibitem[Hel02]{Hel02}
J. William Helton, 
{\it  ``Positive'' noncommutative polynomials are sums of squares},
Ann. Math. (2) 156 (2002) 675--694.

\bibitem[HM04]{HM04}
J. William Helton, Scott McCullough, 
{\it A Positivstellensatz for non-commutative polynomials},
Trans. Amer. Math. Soc. 356 (2004) 3721--3737.

\bibitem[HMP04]{HMP04}
J. William Helton, Scott McCullough, Mihai Putinar,
{\it A non-commutative Positivstellensatz on isometries},
J. Reine Angew. Math. 568 (2004) 71--80. 

\bibitem[JM12]{JM12}
Michael T. Jury, Robert TW Martin, 
{\it Sub-Hardy-Hilbert Spaces in the Non-commutative Unit Row Ball}, in: Function Spaces, Theory and Applications, pp. 349--398. Springer, 2012.

\bibitem[JMS21]{JMS21}
Michael T. Jury, Robert TW Martin, Eli Shamovich, 
{\it Non-commutative rational functions in the full Fock space}, 
Trans. Am. Math. Soc. 374 (2021) 6727--6749. 


\bibitem[KVV14]{KVV14}
Dmitry S. Kaliuzhnyi-Verbovetskyi, Victor Vinnikov, 
{\it Foundations of free noncommutative function theory},
Mathematical Surveys and Monographs 199. American Mathematical Society (AMS), vi, 183 p. (2014). 



		

\bibitem[KVV17]{KVV17}
Igor Klep, Victor Vinnikov, Jurij Volčič,
{\it Null- and positivstellensätze for rationally resolvable ideals},
Linear Algebra Appl. 527 (2017) 260--293.


\bibitem[Mar08]{Mar08}
Murray Marshall,
{\it Positive polynomials and sums of squares},
American Mathematical Society (AMS), xii, 187 p. (2008). 


\bibitem[McC01]{McC01}
Scott McCullough, {\it 
Factorization of operator-valued polynomials in several non-commuting variables,}
Linear Algebra Appl. 326 (2001), no. 1-3, 193--203.

\bibitem[MP05]{MP05}
Scott McCullough, Mihai Putinar, 
{\it Non-commutative sums of squares}, Pac. J. Math. 218 (2005) 167--171.

\bibitem[MiSp17]{MS17}
James A. Mingo,  Roland Speicher, 
{\it Free probability and random matrices},
Fields Institute Monographs 35. Springer, xiv, 336 p. (2017). 


\bibitem[MuSo11]{MS11}
Paul S. Muhly, Baruch Solel, 
{\it Progress in noncommutative function theory},
Sci. China, Math. 54 (2011) 2275--2294. 

\bibitem[NT13]{NT13}
Tim Netzer, Andreas Thom,
{\it Real closed separation theorems and applications to group algebras},
Pac. J. Math. 263 (2013) 435--452.



\bibitem[Oza13]{Oz13}
Narutaka Ozawa, 
{\it About the Connes embedding conjecture},
Jpn. J. Math. (3) 8 (2013) 147--183.


\bibitem[PTD22]{PTD22}
James E. Pascoe, Ryan Tully-Doyle, 
{\it The royal road to automatic noncommutative real analyticity, monotonicity, and convexity},
Adv. Math. 407 (2022), Article ID 108548, 24 p. 


\bibitem[Pop95]{Po95}
Gelu Popescu, 
{\it Multi-analytic operators on Fock spaces},
Math. Ann. 303 (1995) 31--46. 

\bibitem[Sce24]{Sc24}
Claus Scheiderer, 
{\it A course in real algebraic geometry. Positivity and sums of squares},
Graduate Texts in Mathematics 303. Springer, xviii, 404 p. (2024). 


\bibitem[SIG98]{SIG98}
Robert E. Skelton, T. Iwasaki, Dimitri E. Grigoriadis,
{\it A unified algebraic approach to linear control
design}, Taylor \& Francis Ltd., 
xviii, 285 pp. (1998).

	\end{thebibliography}
\end{document}